% Filename:  external-v1.tex
% Revision:  June 19, 2019

\documentclass{amsart}
\usepackage{amssymb,graphicx}
\usepackage{todonotes}

\newtheorem{theorem}{Theorem}

\newtheorem{lemma}[theorem]{Lemma}
\newtheorem{proposition}[theorem]{Proposition}

\newtheorem{conjecture}[theorem]{Conjecture}
\theoremstyle{remark}

\newcommand{\cgC}{\mathcal{C}}
\newcommand{\cgD}{\mathcal{D}}
\newcommand{\cgE}{\mathcal{E}}

\newcommand{\cgI}{\mathcal{I}}
\newcommand{\cgL}{\mathcal{L}}

\newcommand{\cgO}{\mathcal{O}}
\newcommand{\cgR}{\mathcal{R}}
\newcommand{\cgS}{\mathcal{S}}
\newcommand{\cgT}{\mathcal{T}}
\newcommand{\Inc}{\operatorname{Inc}}

\newcommand{\Min}{\operatorname{Min}}

\newcommand{\mmD}{\mathbb{D}}

\newcommand{\AFB}{\operatorname{AFB}}

\title[PLANAR POSETS]{%
 Planar Posets that are Accessible from Below\\
  Have Dimension at Most~$6$}

\author[BIR\'{O}]{Csaba Bir\'{o}}
\address{(Biro)
  Department of Mathematics\\
  University of Louisville\\
  Louisville, Kentucky 40292\\
  U.S.A.}
\email{csaba.biro@louisville.edu}

\author[BOSEK]{Bart\l{}omiej Bosek}
\address{(Bosek) Theoretical Computer Science Department\\
  Faculty of Mathematics and Computer Science\\
  Jagiellonian University\\
  Krak\'ow, Poland}
\email{bosek@tcs.uj.edu.pl}

\author[SMITH]{Heather C. Smith}
\address{(Smith) Department of Mathematics and Computer Science\\
  Davidson College\\
  Davidson, North Carolina 28035\\
  U.S.A.}
\email{hcsmith@davidson.edu}

\author[TROTTER]{William T. Trotter}
\address{(Trotter) School of Mathematics\\
  Georgia Institute of Technology\\
  Atlanta, Georgia 30332\\ 
  U.S.A.}
\email{trotter@math.gatech.edu}

\author[WANG]{Ruidong Wang}
\address{(Wang) Blizzard Entertainment\\
  Irvine, California\\
  U.S.A.}
\email{rwang49@math.gatech.edu}

\author[YOUNG]{Stephen J. Young}
\address{(Young) Pacific Northwest National Laboratory\\
  Richland, Washington 99352\\
  U.S.A.}
\email{stephen.young@pnnl.gov}
\thanks{\emph{PNNL Information Release:} PNNL-SA-144431}

\date{June 19, 2019}

\subjclass[2010]{06A07, 05C35}

\keywords{Dimension, planar poset, accessible from below poset}

\thanks{B. Bosek is supported by Polish National Science Center 
grant 2013/11/D/ST6/03100.}

\begin{document}

\begin{abstract}
Planar posets can have arbitrarily large dimension.  However, a 
planar poset of height~$h$ has dimension at most~$192h+96$, 
while a planar poset with~$t$ minimal elements has dimension at 
most~$2t+1$.  In particular, a planar poset with a unique
minimal element has dimension at most~$3$.  In this paper, we extend
this result by showing that a planar poset has dimension at most~$6$
if it has a plane diagram in which every minimal element is
accessible from below.
\end{abstract}

\maketitle

\section{Introduction}

A non-empty family $\cgR$ of linear extensions
of a poset $P$ is called a \textit{realizer} of $P$ when
$x\le y$ in $P$ if and only if $x\le y$ in $L$ for each $L\in\cgR$.
The \textit{dimension} of a poset $P$, as defined by Dushnik
and Miller in their seminal paper~\cite{bib:DusMil}, is the least
positive integer $d$ for which $P$ has a realizer $\cgR$ with
$|\cgR|=d$. 

In recent years, there has been considerable interest in bounding
the dimension of a poset in terms of graph theoretic properties
of its cover graph and its order diagram.  For example, the following
papers link the dimension of a poset with tree-width, forbidden minors,
sparsity and game coloring numbers:\quad\cite{bib:JMMTWW},
\cite{bib:MicWie}, \cite{bib:Walc}, \cite{bib:JoMiWi-1} and~\cite{bib:JoMiWi-2}.
The results presented here continue in this theme.

Recall that a poset $P$ is said to be \textit{planar} if its order
diagram (also called a Hasse diagram) can be drawn without edge crossings
in the plane.  As is well known, a planar poset has an order
diagram without edge crossings in which edges are straight line segments.
Nevertheless, we elect to consider order diagrams in which covering
edges can be piecewise linear, as this convention simplifies our illustrations.
Given a planar poset $P$, a drawing of the order diagram of $P$
using piecewise linear paths for edges such that there
are no edge crossings will simply be called a 
\textit{plane diagram} of $P$.

In discussing a plane diagram $\mmD$ for a poset $P$, we will assume, without loss of generality, that no two points of $P$ lies on the same horizontal or vertical line in the plane.
We will also 
discuss points in the plane which do not correspond to elements of $P$.  
In particular, the set of points in the plane which 
do not correspond to elements of
$P$ and do not lie on the piecewise linear covering edges 
in $\mmD$ is partitioned
into one or more simply connected regions.   In general, there can be arbitrarily
many bounded regions, however the boundaries of these
regions need not be simple closed curves.  
Among these regions, there
is always a unique, unbounded region which is usually referred to as
the \textit{exterior region}. 
 
Let $\mmD$ be a plane diagram for poset $P$, and let
$x$ be a minimal element of $P$.  We will say that
$x$ is \textit{accessible from below} when 
there is a positive number $\epsilon=\epsilon(x)$ so that any 
point $p$ in the plane which is distinct from $x$, on the 
vertical ray emanating downwards from $x$ and 
within distance $\epsilon$ from $x$ is in the exterior 
region.  In turn we say that a plane diagram $\mmD$ is 
\textit{accessible from below} if
if every minimal element of $P$ is accessible from below.

We find it convenient to abbreviate the phrase ``accessible from
below'' with the acronym $\AFB$, so we will say that a minimal
element $x$ is $\AFB$ in a diagram $\mmD$, and we will
refer to an $\AFB$-diagram.  We then say that a poset $P$ is 
an $\AFB$-poset if it has an $\AFB$-diagram. All $\AFB$-posets 
are planar, but there are planar posets which are not $\AFB$.
Also, an $\AFB$-poset can have many plane diagrams with some of them
$\AFB$-diagrams and others not.  We illustrate this situation
in Figure~\ref{fig:two_drawings} where we show two plane diagrams
of an $\AFB$-poset $P$.  The diagram on the right is an $\AFB$-diagram 
while the diagram on the left is not.
 
\begin{figure}\label{fig:two_drawings}
\begin{center}
\includegraphics[scale=.6]{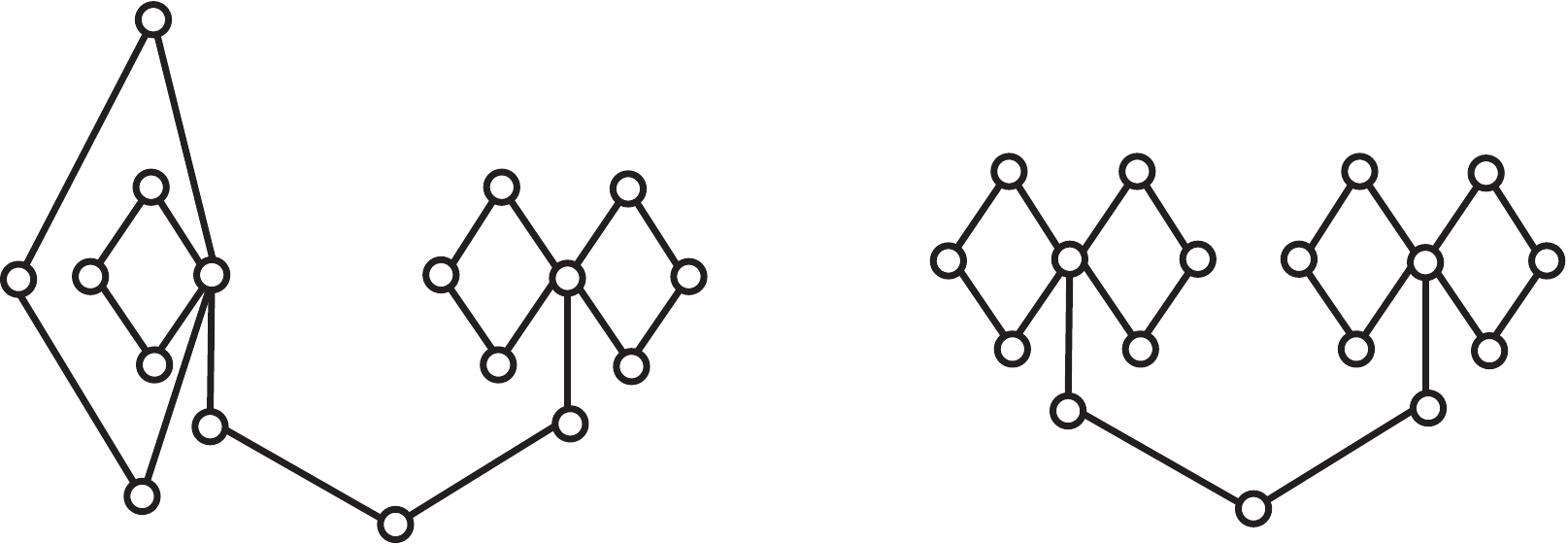}

\end{center}
\caption{The diagram on the right is an $\AFB$-Diagram}
\end{figure}

The principal goal of this paper is to prove the following
upper bound on the dimension of an $\AFB$-poset.

\begin{theorem}\label{thm:main}
If $P$ is an $\AFB$-poset, then $\dim(P)\le 6$.
\end{theorem}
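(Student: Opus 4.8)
The plan is to build on the known bound that a planar poset with a unique minimal element has dimension at most $3$, and to use the $\AFB$ hypothesis to control, at bounded rather than linear cost, all of the incomparabilities created by the (possibly many) minimal elements. The natural first move is to attach a new global minimum. Fix an $\AFB$-diagram $\mmD$, place a new element $\hat{0}$ well below every point of $P$, and attempt to draw a covering edge from $\hat{0}$ up to each element of $\Min(P)$. Because each minimal element has a downward opening into the exterior region and that region is connected, such edges can certainly be routed in the plane without crossings, with the minimal elements met in the cyclic order in which their downward openings occur along the boundary of the exterior region.

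The difficulty, and the reason the final bound is $6$ rather than $3$, is that a plane diagram demands that every covering edge be an upward arc, whereas an edge routed from $\hat{0}$ through the exterior region may be forced to rise and fall as it threads past the existing structure. I therefore do not expect the augmented poset $P\cup\{\hat{0}\}$ to be planar in general, so the clean reduction to the unique-minimum bound is unavailable. Instead I would extract from $\mmD$ a left/right structure: using the downward rays guaranteed by the $\AFB$ condition, each minimal element can be driven down to a common horizontal baseline, and every element $z\in P$ can then be assigned a leftmost and a rightmost descending path to that baseline. For an incomparable pair, comparing these descending paths records whether $z$ lies to the left or to the right of the other element.

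With this structure in hand, I would assemble the realizer from two groups of three linear extensions. Partition $\Min(P)$ into two sub-families according to the left/right data, and for each sub-family build a planar augmentation in which only that sub-family is joined to an auxiliary minimum by upward, non-crossing arcs. The cited theorem then yields three linear extensions reversing every critical pair assigned to that sub-family, including those among the minimal elements themselves. Taking the union of the two triples gives a realizer $\cgR$ with $|\cgR|\le 6$; one then checks that every incomparable pair of $P$, whether between two minimal elements, between a minimal element and a higher one, or between two non-minimal elements, is reversed by some extension in $\cgR$, which gives $\dim(P)\le 6$.

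I expect the main obstacle to be the planarity bookkeeping in the splitting step. I must choose the partition of $\Min(P)$ so that, when one sub-family is connected to the auxiliary minimum, the routed arcs really can be made upward and crossing-free, so that the unique-minimum hypothesis legitimately applies to each augmentation. The delicate point is to track how the downward rays subdivide the exterior region and to verify that the left/right classification of every element is consistent across the two augmentations, so that no incomparable pair slips through unreversed.
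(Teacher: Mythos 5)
There is a genuine gap, and it sits at the heart of your plan. Your six linear extensions are supposed to come from applying Theorem~\ref{thm:planar-0} to two augmentations, each obtained by joining one sub-family of $\Min(P)$ to an auxiliary minimum. But if $\hat{0}_1$ is placed below only the sub-family $M_1$, the augmented poset still has every element of $M_2$ among its minimal elements, so it has no zero and the theorem simply does not apply; and if you instead pass to the up-set of $\hat{0}_1$, which does have a zero, you discard every incomparable pair $(x,y)$ in which $y$ lies above no element of $M_1$ --- in particular a pair with $x\in M_1$ and $y$ above only elements of $M_2$ is reversed by neither triple. Separately, the geometric step fails for a reason that no partition of $\Min(P)$ into boundedly many parts can repair: the obstruction to drawing an upward, non-crossing arc from a point below the diagram to a minimal element $m$ is the structure of $P$ itself, not interference among the auxiliary arcs. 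For instance, $m$ may sit just above a long, nearly horizontal covering chain, walled in on the left and right by chains rising from the two ends of that chain, with the only passage from the exterior pocket below $m$ to the rest of the exterior region going over the top of the diagram. Such an $m$ is still $\AFB$, yet even in isolation it cannot be reached from below by a $y$-monotone arc, so neither of your two augmentations would be a plane diagram.

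For contrast, the paper never attempts to create a zero. It first reduces the problem (at the cost of one reversible set, the ``enclosed'' pairs) to reversing the incomparable pairs in $\Min(P)\times P$, then threads a simple closed envelope curve $\cgE$ through the minimal elements, and classifies each pair $(x,y)$ by comparing the cyclic position of $x$ on $\cgE$ with the left-to-right order of $M\cap D_P[y]$ inside $D_P[y]$. Strict-alternating-cycle arguments, driven by nested regions bounded by extreme witnessing paths spliced with arcs of $\cgE$, show that five reversible sets cover $\Min(P)\times P$, giving $\dim(P)\le 6$. The left/right dichotomy you propose to extract is genuinely present in the paper (Proposition~\ref{pro:C2}), but it is used to control alternating cycles directly rather than to certify planarity of an augmented poset with a zero.
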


The remainder of this paper is organized as follows.
Some background material necessary for the proof of 
Theorem~\ref{thm:main} is summarized in the next section, 
and the proof our main theorem is given in Section~\ref{sec:proof}.  We close 
in Section~\ref{sec:close} with some comments on the motivation for 
this line of research and connections with open problems.

We note that for every $d\ge 6$, it is an easy exercise to construct an 
$\AFB$-poset $P$ for which one cannot argue that $\dim(P)\le d$ by any
of the other known results for planar posets. Although we do not know if our upper bound is best possible, as
detailed in Section~\ref{sec:close}, a finite upper bound on the dimension of $\AFB$-posets is sufficient for our long range goals.

\section{Background Material}\label{sec:prelim}

We use (essentially) the same notation and 
terminology for working with dimension 
as has been employed by several authors
in recent papers, including: \cite{bib:FeTrWi},
\cite{bib:StrTro}, \cite{bib:JMMTWW}, \cite{bib:TroWan} and~\cite{bib:TrWaWa},
so our treatment will be concise.

Let $P$ be a poset with linear
extension $L$ and let $(x,y)\in \Inc(P)$.  We say $L$ \textit{reverses} $(x,y)$ when $x>y$ in $L$.
When $S\subset\Inc(P)$, we say that $L$ \textit{reverses} $S$ when $L$ reverses
every pair in $S$.  When $\cgR$ is a family of linear extensions of
$P$, we say $\cgR$ reverses $S$ when, for each $(x,y)\in S$, there
is some $L\in\cgR$ such that $L$ reverses $(x,y)$. Evidently,
the dimension of $P$ is just the minimum size of a non-empty family
of linear extensions which reverses $\Inc(P)$.

In the discussion to follow, we sometimes express a linear order  on a finite
set by writing $[u_1<u_2<\dots <u_r]$, for example.

A subset $S\subset\Inc(P)$ is \textit{reversible} when there is a
linear extension $L$ of $P$ which reverses $S$.  When $k\ge2$,
a sequence $\{(x_i,y_i):1\le i\le k\}$ of incomparable pairs in $P$
is called an \textit{alternating cycle} (of length~$k$) 
when $x_i\le y_{i+1}$ in $P$ for all $i\in [k]$, which should be 
interpreted \emph{cyclically}, i.e., we also intend that
$x_k \le y_1$ in $P$.  For the balance of the paper, we will use similar 
cyclic notation without further comment.

An alternating cycle is \textit{strict}
if for each $i\in[k]$, $x_i\le y_j$ in $P$ if and only if $j=i+1$, and further, the sets $\{x_1,x_2,\dots,x_k\}$ and
$\{y_1,y_2,\dots,y_k\}$ are $k$-element antichains.  

A poset has dimension~$1$ if and only if it is a chain.
When $P$ is not a chain, the dimension of $P$ is just
the least positive integer $d\ge2$ for which there is a covering
$\Inc(P)=S_1\cup S_2\cup\dots\cup S_d$ with $S_i$ reversible
for each $i\in [d]$,
The following elementary lemma of Trotter and 
Moore~\cite{bib:TroMoo} characterizing reversible sets
has become an important tool in dimension theory.

\begin{lemma}\label{lem:ac}
Let $P$ be a poset and let $S\subseteq\Inc(P)$.  Then the following
statements are equivalent.

\begin{enumerate}
\item $S$ is reversible.
\item There is no $k\ge2$ for which $S$ contains an alternating cycle
of length~$k$.
\item There is no $k\ge2$ for which $S$ contains a strict alternating cycle
of length~$k$.
\end{enumerate}
\end{lemma}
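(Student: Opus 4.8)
The plan is to prove the three statements equivalent through the cyclic chain of implications $(1)\Rightarrow(2)\Rightarrow(3)\Rightarrow(1)$. Two of the links are immediate: $(2)\Rightarrow(3)$ holds simply because every strict alternating cycle is an alternating cycle, so if $S$ contains no alternating cycle at all it certainly contains no strict one; and $(1)\Rightarrow(2)$ is a short direct computation. Essentially all the content lies in $(3)\Rightarrow(1)$, which I would attack in contrapositive form: \emph{if $S$ is not reversible, then $S$ contains a strict alternating cycle.}

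For $(1)\Rightarrow(2)$, suppose $L$ is a linear extension of $P$ reversing $S$ and, for contradiction, that $\{(x_i,y_i):1\le i\le k\}$ is an alternating cycle in $S$. Because $L$ reverses each pair we have $y_i<x_i$ in $L$, while $x_i\le y_{i+1}$ in $P$ and hence in $L$. Chaining these cyclically yields $y_1<x_1\le y_2<x_2\le\cdots\le y_1$, so $y_1<y_1$ in $L$, which is absurd.

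For $(3)\Rightarrow(1)$ I would first extract an ordinary alternating cycle from a failure of reversibility and then sharpen it. For the extraction, I form the auxiliary digraph on the ground set of $P$ recording every constraint a reversing extension must satisfy: an arc $u\to v$ whenever $u<v$ in $P$, together with an arc $y\to x$ for each $(x,y)\in S$. A linear extension of $P$ reversing $S$ is precisely a topological order of this digraph, so, the ground set being finite, such an extension exists if and only if the digraph is acyclic. Hence if $S$ is not reversible the digraph has a directed cycle; contracting maximal runs of $P$-arcs by transitivity leaves a cycle alternating between reversal arcs $y_i\to x_i$ coming from pairs of $S$ and (possibly trivial) relations $x_i\le y_{i+1}$ in $P$. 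This is an alternating cycle, and its length is at least $2$, since a single reversal arc would force $x_1\le y_1$ in $P$, contradicting $(x_1,y_1)\in\Inc(P)$.

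The remaining task---making the cycle strict---is the step I expect to be the main obstacle, as it is where the combinatorial bookkeeping must be handled with care. I would fix an alternating cycle $\{(x_i,y_i):1\le i\le k\}$ in $S$ of \emph{minimum} length. If the comparability condition failed, say $x_i\le y_j$ in $P$ for some $j\ne i+1$ (necessarily $j\ne i$, since $x_i\not\le y_i$), then the pairs $(x_i,y_i),(x_j,y_j),(x_{j+1},y_{j+1}),\dots,(x_{i-1},y_{i-1})$ would again form an alternating cycle, now closed using $x_i\le y_j$ in place of $x_i\le y_{i+1}$; as $j\ne i+1$ this cycle is strictly shorter, contradicting minimality. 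Thus $x_i\le y_j$ holds if and only if $j=i+1$. The two antichain conditions then follow from this sharpened statement with no further work: if $x_i\le x_{i'}$ then $x_i\le x_{i'}\le y_{i'+1}$ forces $i'+1=i+1$, so the $x_i$ are pairwise incomparable and in particular distinct, and symmetrically $y_i\le y_{i'}$ together with $x_{i-1}\le y_i$ forces $i'=i$. This yields the required strict alternating cycle and closes the loop.
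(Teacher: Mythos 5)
Your proof is correct. Note that the paper itself gives no proof of this lemma---it is quoted from Trotter and Moore \cite{bib:TroMoo}---so there is nothing internal to compare against; your argument (the topological-sort/acyclicity characterization to extract an alternating cycle from a non-reversible set, followed by the observation that a minimum-length alternating cycle is automatically strict) is the standard one from the literature, and all the steps, including the index count showing the shortened cycle still has length at least $2$, check out.
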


When $x<y$ in a poset $P$, we refer to a sequence $W[x,y]=
(u_1,u_2,\dots,u_r)$ of elements
of $P$ as a \textit{witnessing path} from $x$ to $y$ when
$u_1=x$, $u_r=y$ and $u_i$ is covered by $u_{i+1}$ in $P$ whenever
$1\le i<r$.  In general, there are many different witnessing paths from $x$ to $y$
and, in most instances, it will not matter which one is chosen.

When $\mmD$ is a plane diagram for a poset $P$ and
$x<y$ in $P$, we will take advantage of the fact that 
there is a uniquely determined ``left-most''
witnessing path from $x$ to $y$.  Analogously, there is a uniquely
determined ``right-most'' witnessing path from $x$ to $y$.
In Figure~\ref{fig:left_right_paths},  
we show a portion of a plane diagram where there are a total
of $12$ witnessing paths from $x$ to $y$.  The left-most path is
shown using dotted edges while the right-most path is shown
using bold face edges.

\begin{figure}
\begin{center}
\includegraphics[scale=.6]{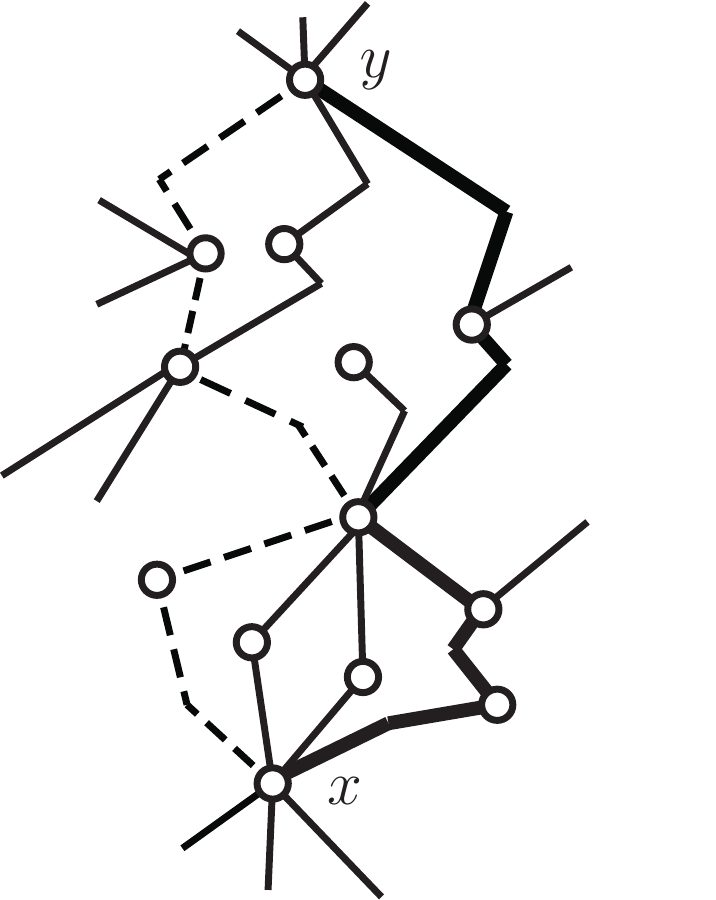}

\end{center}
\caption{Left-most and right-most witnessing paths}
\label{fig:left_right_paths}
\end{figure}

We can view a witnessing path $W[x,y]$ as a finite sequence of points 
of the poset $P$, but we can also view it as the simply connected
(and therefore infinite) set of points in 
the plane which belong to the covering edges in the path.
From the context of the discussion, it should 
be clear whether we intend a witnessing path to be simply a 
finite set of points from $P$ or an infinite set of points in the plane.
In the same spirit, we will splice witnessing paths together to form
simple closed curves in the plane.  These will always be infinite sets of points.

At a critical stage in our proof, we will discuss a simple closed 
curve $\cgE$ such that the minimal elements of $P$ are on $\cgE$, while
all other elements of $P$ are in the interior of $\cgE$. 

\subsection{Planar Posets with a Zero}

When a poset has a unique minimal element, that element is usually
referred to as a ``zero.''  Dually, if a poset has a unique maximal
element, then it is called a ``one.''
We state formally the theorem of Trotter and Moore~\cite{bib:TroMoo},
and give a short synopsis of a more modern proof given
in~\cite{bib:TroWan}, as these details will be important in proving
our main theorem.

\begin{theorem}\label{thm:planar-0}
If $P$ is a planar poset and $P$ has a zero, then
$\dim(P)\le 3$.
\end{theorem}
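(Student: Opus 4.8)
The plan is to produce a cover $\Inc(P) = S_1 \cup S_2 \cup S_3$ by three reversible sets; by Lemma~\ref{lem:ac} it then suffices to guarantee that none of the $S_i$ contains an alternating cycle. All three sets will be read off from a fixed plane diagram $\mmD$ of $P$, using the left-to-right information carried by the leftmost and rightmost witnessing paths. Since $\hat 0$ is the unique minimal element, every other element $x$ is joined to $\hat 0$ by witnessing paths, so $\Left[\hat 0, x]$ and $\Right[\hat 0, x]$ are defined for all $x \in P \setminus \{\hat 0\}$. I would first establish the structural fact that the leftmost paths interact in a laminar, tree-like fashion rooted at $\hat 0$ (a prefix of $\Left[\hat 0,x]$ reaching a vertex $w$ should itself be the leftmost path to $w$), and likewise for the rightmost paths. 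Because $x$ and $y$ are incomparable, neither point lies on the other's leftmost path, so the two curves $\Left[\hat 0,x]$ and $\Left[\hat 0,y]$ share an initial segment and then part at a well-defined last common vertex $v$; I declare $x$ to be \emph{to the left of} $y$ when the edge of $\Left[\hat 0,x]$ leaving $v$ precedes that of $\Left[\hat 0,y]$ in the rotation at $v$. This gives a total left--right comparison on incomparable pairs.

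The first, naive attempt is to build a $2$-realizer directly from this relation: one linear extension $L_1$ in which, among incomparable elements, ``left'' is placed higher, and a second $L_2$ in which ``right'' is placed higher. If both existed, $L_1$ would reverse every ordered pair whose orientation reads ``$y$ to the left of $x$'' while $L_2$ reversed the complementary orientations, forcing $\dim(P) \le 2$. Since planar posets with a zero can have dimension $3$, this must fail: the left--right relation, viewed as a set of pairs to be reversed, can itself contain alternating cycles, so the desired $L_1$ or $L_2$ need not exist. The substance of the argument is to show that these failures are mild enough to be absorbed into a single additional set, so that $\Inc(P)$ splits into three pieces, each free of alternating cycles, with the leftmost-path data and the rightmost-path data pulling apart exactly the discordant pairs that force the third piece.

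The main obstacle, and the step I expect to consume nearly all of the effort, is precisely these three reversibility claims. This is where planarity must be exploited rather than just the combinatorial left--right relation. Suppose $\{(x_i,y_i) : 1 \le i \le k\}$ were an alternating cycle inside one of the $S_i$, so that $x_i \le y_{i+1}$ in $P$ cyclically while the geometry forces each $x_i$ onto a prescribed side of $y_i$. I would splice together, for each $i$, a witnessing path realizing $x_i \le y_{i+1}$ with appropriate leftmost or rightmost paths descending back to $\hat 0$, closing the configuration into one or more simple closed curves in the plane. The contradiction then comes from the Jordan curve theorem: the cyclic ``always to the same side'' condition forces one spliced path to cross another, which is impossible because $\mmD$ has no edge crossings. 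Making this crossing argument rigorous---choosing which paths to splice so that the resulting curve genuinely separates the relevant $x_j$ from $\hat 0$ or from its partner $y_j$, and disposing of the degenerate cases in which the paths share edges---is the delicate and decisive part of the proof; everything else reduces to bookkeeping about the trees of leftmost and rightmost paths.
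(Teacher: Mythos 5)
Your overall architecture is the right one --- this is essentially the Trotter--Moore/Trotter--Wang argument: classify incomparable pairs by left--right geometry relative to the zero in a fixed plane diagram, cover $\Inc(P)$ by three sets, and certify reversibility via Lemma~\ref{lem:ac}. But there is a genuine gap at the decisive point, namely the decomposition itself. You work only with the leftmost-path comparison and then say the ``discordant'' pairs (those on which the leftmost-path data and the rightmost-path data disagree) can be ``absorbed into a single additional set.'' That cannot work as stated. The discordant pairs are exactly the nested ones, and discordance is symmetric: if $x$ is nested inside $y$ then $y$ is nested outside $x$, so the set of all discordant ordered pairs contains both $(x,y)$ and $(y,x)$ --- and no linear extension reverses both orderings of the same pair. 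The actual content of the theorem is the asymmetric split the paper uses: with $\cgR,\cgL,\cgO,\cgI$ denoting the right-of, left-of, outside, and inside classes, the three reversible sets are $\cgR\cup\cgO$, $\cgL\cup\cgO$, and $\cgI$. Only one half of the discordant pairs (the ``inside'' pairs, i.e.\ the enclosed pairs of Lemma~\ref{lem:enclosed}) gets its own linear extension; the complementary ``outside'' half must be distributed into \emph{both} of the other two sets. Your proposal never identifies this asymmetry, and without it the third set is not reversible for trivial reasons, before any Jordan-curve analysis begins.

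A secondary difference: you defer all three reversibility proofs to an unexecuted splicing-and-crossing argument, which you correctly flag as the bulk of the work. The paper avoids most of that work by defining the first two linear extensions constructively, as depth-first searches from the zero with left-to-right and right-to-left preference rules; these extensions reverse $\cgR\cup\cgO$ and $\cgL\cup\cgO$ by construction, and only the set $\cgI$ requires the alternating-cycle machinery (the strict-alternating-cycle argument of Lemma~\ref{lem:enclosed}, which uses exactly the enclosure-by-two-witnessing-paths idea you sketch). If you repair the decomposition as above, your Jordan-curve strategy for the remaining set is sound and matches the paper's; but as written, the proof does not go through.
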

Since a poset and its dual have the same dimension, we also know that
a planar poset with a one has dimension at most~$3$.

Given a plane diagram $\mathbb{D}$ for a poset $P$ with a zero, let $L_1$ be the linear extension of $P$ obtrained from a depth-first search using a local left-to-right preference rule. Similarly, let $L_2$ be another linear extension of $P$ which is also obtained via a depth-first search, but with a right-to-left preference.  As noted in~\cite{bib:TroWan}, for every
$(x,y)\in\Inc(P)$, exactly one of the following four statements applies:

\begin{enumerate}
\item $x$ is right of $y$ (i.e. $x>y$ in $L_1$ and $x<y$ in $L_2$).
\item $x$ is left of $y$ (i.e. $x<y$ in $L_1$ and $x>y$ in $L_2$).
\item $x$ is outside $y$ (i.e. $x<y$ in both $L_1$ and$L_2$).
\item $x$ is inside $y$ (i.e. $x>y$ in both $L_1$ and$L_2$).
\end{enumerate}
In Figure~\ref{fig:planar-0}, we show a plane diagram $\mmD$ for a poset
$P$ with a zero in which  $10$ is right of~$5$, $7$ is left of~$9$,
$14$ is outside~$6$ and $10$ is inside~$13$.

\begin{figure}
\begin{center}
\includegraphics[scale=.6]{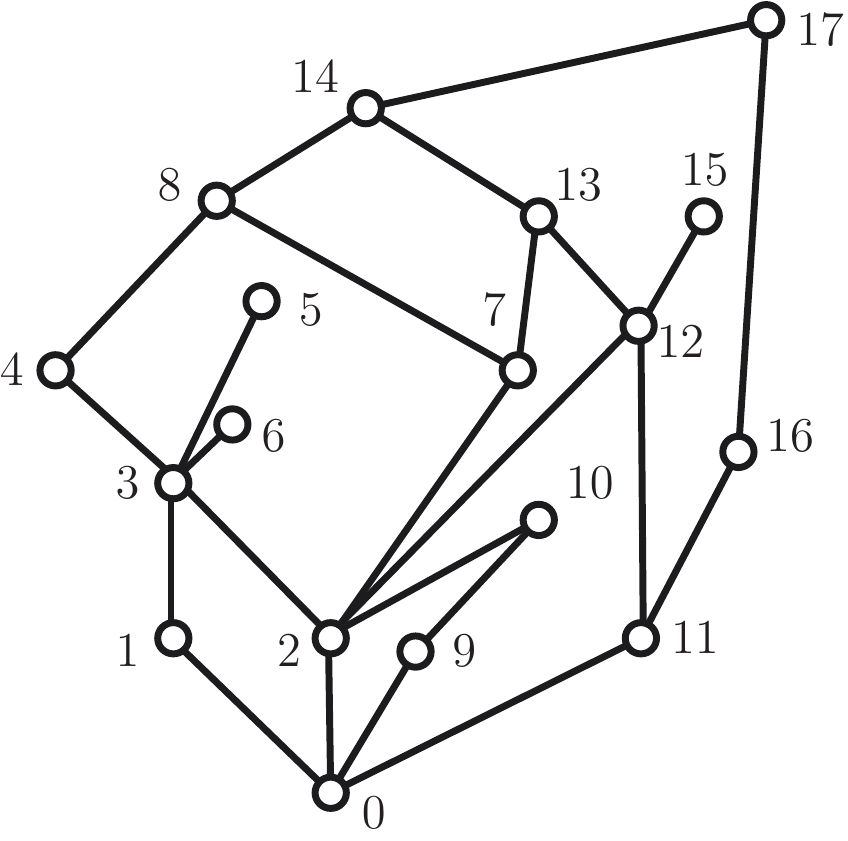}

\end{center}
\caption{A planar poset with a zero}
\label{fig:planar-0}
\end{figure}

Accordingly, it is natural to partition $\Inc(P)$ as
$\cgR\cup\cgL\cup\cgO\cup\cgI$, where $\cgR$ consists of all
pairs $(x,y)$ with $x$ right of $y$, etc.   The binary
relations $\cgR$ and $\cgL$ are complementary in the sense
that $x$ is left of $y$ if and only if $y$ is right of $x$.
Similarly, the binary relations $\cgI$ and $\cgO$ are complementary.
Also both $\cgL$ and $\cgR$ are transitive, e.g., if $x$ is
left of $y$ and $y$ is left of $z$, then $x$ is left of $z$.

The inequality $\dim(P)\le3$ is proved by showing that the
following sets are reversible: (1)~$\cgR\cup\cgO$, (2)~$\cgL\cup\cgO$,
and (3)~$\cgI$.  The arguments given in~\cite{bib:TroWan}
for the first two of these statements are constructive, as the desired
linear extensions are obtained via depth-first searches.  Note that the
labeling used in Figure~\ref{fig:planar-0} results from a depth-first
search using a local left-to-right preference rule.  This linear extension
illustrates that $\cgR\cup\cgO$ is reversible.  A depth-first search using
a local right-to-left preference rule will produce a linear extension
reversing all pairs in $\cgL\cup\cgO$.

To complete the proof, it is then
only necessary to show that $\cgI$ is reversible.
However, as pointed out in~\cite{bib:TroWan}, a somewhat more general
result holds: planar posets with $t$ minimal elements have dimension at most $2t+1$. 

Let $\mmD$ be a plane diagram for a planar poset $P$ (with no restriction
on the number of minimal elements of $P$).  When $z\in P$,
we let $U_P[z]$ consist of all elements $x\in P$ with $z\le x$ in
$P$.  The subposet $U_P[z]$ is planar and $z$ is a zero.  Accordingly,
we can classify the incomparable pairs in $U_P[z]$ using the
same four labels $\cgR$, $\cgL$, $\cgO$, and $\cgI$.
We will say that an incomparable pair $(x,y)$ 
in $P$ is an \textit{enclosed} pair when there is some $z\in P$ 
such that $x$ is inside $y$ in $U_P[z]$.  

For the benefit of readers who may be new to arguments using
alternating cycles, we give a proof for the following elementary
lemma.

\begin{lemma}\label{lem:enclosed}
Let $\mmD$ be a plane diagram for a poset $P$.  Then the
set $\cgS$ of all enclosed pairs in $P$ is reversible.
\end{lemma}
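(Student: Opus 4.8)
The plan is to argue by contradiction using Lemma~\ref{lem:ac}: if $\cgS$ were not reversible, it would contain an alternating cycle $\{(x_i,y_i):1\le i\le k\}$ with $k\ge 2$ and $x_i\le y_{i+1}$ in $P$ (read cyclically). For each $i$, since $(x_i,y_i)$ is enclosed, I would fix a witness $z_i\in P$ with $x_i$ inside $y_i$ in $U_P[z_i]$; in particular $z_i\le x_i$, $z_i\le y_i$, and $x_i\parallel y_i$. The idea is to encode each witness geometrically: let $\cgC_i$ be the simple closed curve obtained by splicing together the left-most and right-most witnessing paths from $z_i$ to $y_i$ in $U_P[z_i]$ (these differ precisely because $y_i$ encloses a nonempty region). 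By the meaning of ``inside,'' the point $x_i$ lies in the bounded interior of $\cgC_i$; moreover every vertex of $\cgC_i$ is an element $v\le y_i$ of $P$, and since covering edges point upward, $y_i$ is the unique top-most point of $\cgC_i$.

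The crucial step is to locate $y_{i+1}$ relative to $\cgC_i$. Because $z_i\le x_i\le y_{i+1}$, the element $y_{i+1}$ lies in $U_P[z_i]$, so I would fix a witnessing path $Q_i$ from $x_i$ up to $y_{i+1}$; every vertex of $Q_i$ is $\ge x_i$. I claim $Q_i$ is disjoint from $\cgC_i$: both are unions of covering edges of the \emph{single} plane diagram $\mmD$, so they can meet only at a common vertex $v$ (a transversal crossing would be a forbidden edge crossing, and a shared edge forces a shared endpoint). Any such $v$ would satisfy $x_i\le v\le y_i$, contradicting $x_i\parallel y_i$. Since $x_i$ is interior to $\cgC_i$ while $Q_i$ is connected and avoids $\cgC_i$, the entire path $Q_i$—and in particular $y_{i+1}$—lies in the interior of $\cgC_i$.

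To close the argument I would use the vertical coordinate of $\mmD$ as a single global height: along any witnessing path this coordinate strictly increases, and distinct elements have distinct coordinates. Hence for every $i$ the interior point $y_{i+1}$ has strictly smaller vertical coordinate than the top-most boundary point $y_i$ of $\cgC_i$. Choosing the index $j$ for which $y_j$ attains the largest vertical coordinate among $y_1,\dots,y_k$ and applying this to $\cgC_{j-1}$, whose interior contains $y_j$, yields $\mathrm{height}(y_j)<\mathrm{height}(y_{j-1})\le\mathrm{height}(y_j)$, a contradiction.

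I expect the main obstacle to be exactly that the enclosing curves $\cgC_i$ are anchored at \emph{different} bottom elements $z_i$, so the notions of ``inside'' inherited from the different up-sets $U_P[z_i]$ cannot be compared to one another directly. The device that overcomes this is to stop reasoning inside any individual $U_P[z_i]$ once the curves are in hand, and instead measure everything against the one coordinate system of the ambient diagram $\mmD$: the monotonicity of the vertical coordinate along covering edges supplies a uniform height that all of the curves respect, and this is what allows the extremal (top-most) choice to close the cycle.
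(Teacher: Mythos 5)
Your proposal is correct and follows essentially the same route as the paper: enclose $x_i$ by a closed curve built from witnessing paths below $y_i$, force $y_{i+1}$ into its interior via a witnessing path from $x_i$ that cannot meet the curve without violating $x_i\parallel y_i$, and derive a cyclically decreasing height among the $y_i$, which is impossible. The only (harmless) difference is that the paper shrinks the curve to run between a highest witness $z_i$ and a lowest enclosing element $y'_i\le y_i$ so that the two witnessing paths form a genuinely \emph{simple} closed curve, whereas your left-most and right-most paths from $z_i$ to $y_i$ may share internal vertices; the separation and height arguments survive this, since every point of the bounded region containing $x_i$ still lies strictly below $y_i$.
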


\begin{proof}
We argue by contradiction, supposing that $\cgS$ is not reversible.
Then by Lemma~\ref{lem:ac}, there is an integer $k\ge2$ and a strict alternating
cycle $\{(x_i,y_i):1\le i\le k\}$ of enclosed pairs.
For each $i\in[k]$, let $z_i$ be the unique element
of $P$ which is highest in the plane  with $x_i$ inside
$y_i$ in $U_P[z_i]$.   Then let $y'_i$ be the unique element
of $P$ which is lowest in the plane and satisfies both $y'_i\le y_i$ in
$P$ and $x_i$ is inside $y'_i$ in $U_P[z_i]$.  Then there
are two witnessing paths $W_1[z_i,y'_i]$ and $W_2[z_i,y'_i]$ which
form a simple closed curve $\cgC_i$ with $x_i$ in its interior.

Since $x_i\le y_{i+1}$ in $P$
and $x_i\parallel y_{i}$ in $P$, it follows that $y_{i+1}$
is also in the interior of $\cgC_i$.  Therefore $y_{i+1}$ is lower 
in the plane than $y'_i$. This is a 
contradiction since this statement cannot hold for all 
$i\in[k]$.
\end{proof}

In a dual manner, when $\mmD$ is a plane diagram for a poset $P$ and
$z\in P$, we define $D_P[z]$ as the subposet consisting of all
$x\in P$ with $x\le z$ in $P$.  The subposet $D_P[z]$ is planar, and 
the element $z$ is a one.  Now we can classify the incomparable
pairs in the subposet $D_P[z]$ using the same four labels but applied with the
obvious dual interpretation.  In general, if
$(x,y)$ is an incomparable pair in a subposet of the form
$D_P[z]$, then any of the four labels may be correct for the
pair $(x,y)$.  However, if $\mmD$ is an $\AFB$-diagram for a 
poset $P$, then two of the four
labels cannot be applicable. We state formally
the following nearly self-evident proposition for emphasis.
It does not hold for planar posets in general.

\begin{proposition}\label{pro:C2}
Let $\mmD$ be an $\AFB$-diagram for a poset $P$, and let
$z\in P$.  If $(x,y)$ is an incomparable pair in $D_P[z]$,
then either $x$ is left of $y$ in $D_P[z]$ or $x$ is right of 
$y$ in $D_P[z]$.  Furthermore, if $z'\in P$ and
$x,y\in D_P[z']$, then $x$ is left of $y$ in
$D_P[z]$ if and only if $x$ is left of $y$
in $D_P[z']$.
\end{proposition}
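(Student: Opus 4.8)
\textbf{Plan for Part 1 (only \emph{left} and \emph{right} occur).} Since the labels $\cgI$ and $\cgO$ are complementary ($x$ is outside $y$ in $D_P[z]$ exactly when $y$ is inside $x$), it suffices to rule out the label $\cgI$: I will show that no incomparable pair $(x,y)$ in $D_P[z]$ has $x$ inside $y$. Suppose, for contradiction, that $x$ is inside $y$. Applying the dual of the construction in the proof of Lemma~\ref{lem:enclosed} (now in the down-set $D_P[z]$, where $z$ is a one), I obtain an element $y''$ with $y\le y''$ and $x$ inside $y''$, together with two witnessing paths from $y''$ up to $z$ that splice into a simple closed curve $\cgC$ having $x$ in its interior. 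The bottom vertex of $\cgC$ is $y''$, and every other vertex of $\cgC$ has a lower neighbor on $\cgC$.

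The key step is a \emph{trapping} argument driven by planarity. Starting at $x$ and descending through covering edges, a covering edge joining a point strictly interior to $\cgC$ to a point strictly exterior to $\cgC$ would have to cross $\cgC$; but $\cgC$ is a union of covering edges of $\mmD$, and covering edges do not cross. Hence the entire downward reach from $x$ remains in the closed region bounded by $\cgC$, and in particular some minimal element $m\le x$ of $P$ lies in that closed region. This $m$ cannot lie on $\cgC$: the only vertex of $\cgC$ that could be minimal is the bottom vertex $y''$, and $m=y''$ would give $y\le y''=m\le x$, contradicting $x\parallel y$. Therefore $m$ is \emph{strictly} interior to $\cgC$. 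Now a neighborhood of $m$ lies inside $\cgC$, so the points just below $m$ on its downward vertical ray lie in a \emph{bounded} face of $\mmD$ and not in the (unbounded) exterior region; thus $m$ is not accessible from below, contradicting that $\mmD$ is an $\AFB$-diagram. I expect this descent-and-trapping step to be the main obstacle in Part 1, since it is where planarity must be converted into a rigorous topological containment.

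\textbf{Plan for Part 2 ($z$-independence of the left/right label).} Here I will exhibit a description of ``$x$ is left of $y$'' that makes no reference to $z$. Because $\mmD$ is an $\AFB$-diagram, every minimal element is accessible from below, so there is a simple closed curve $\cgE$ passing through all minimal elements of $P$ with every other element in its interior (the curve foreshadowed in Section~\ref{sec:prelim}); cutting $\cgE$ below everything fixes, once and for all, a left-to-right order of the minimal elements of $P$. For incomparable $x,y$ in $D_P[z]$, the leftmost and rightmost downward witnessing paths from $x$ and from $y$ to the minimal level, together with arcs of $\cgE$, cut out planar ``shadows'' of $x$ and of $y$; I will argue that $x$ is left of $y$ in $D_P[z]$ if and only if the shadow of $x$ lies to the left of the shadow of $y$ in this fixed order. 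All of these data---$\cgE$, the leftmost/rightmost downward paths, and the shadows---are intrinsic to $\mmD$ and do not depend on $z$, so the label agrees in $D_P[z]$ and in $D_P[z']$ whenever $x,y$ lie in both.

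The delicate point, and what I expect to be the crux of Part 2, is reconciling the combinatorial definition of ``left of'' (given through the two depth-first orders $L_1,L_2$ computed from $z$) with this geometric shadow description, and doing so uniformly in two cases: when $x$ and $y$ have disjoint down-sets, where non-crossing of witnessing paths forces their shadows to be disjoint and non-interleaving intervals; and when $x$ and $y$ share part of their down-set, where the left/right distinction is instead decided at the vertex where the upward paths to $x$ and to $y$ diverge. In both cases the determining datum is the planar rotation recorded by $\mmD$ rather than the choice of $z$, and once the translation between the depth-first definition and this embedding datum is established, the asserted equivalence $x\text{ left of }y\text{ in }D_P[z]\iff x\text{ left of }y\text{ in }D_P[z']$ follows immediately.
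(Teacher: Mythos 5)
For calibration: the paper never actually proves this proposition---it is stated as ``nearly self-evident'' with no argument---so your attempt stands on its own merits. Your Part~1 is correct and is surely the intended justification: an inside pair in $D_P[z]$ yields (by dualizing the construction in the proof of Lemma~\ref{lem:enclosed}) a simple closed curve $\cgC$ built from covering edges with $x$ in its interior, and then a minimal element below $x$ is trapped, contradicting accessibility from below. One small repair is needed: forbidding covering edges that \emph{cross} $\cgC$ does not by itself confine the descent, since a descending path could reach a vertex \emph{of} $\cgC$ and exit through it; your argument only treats the case where the terminal minimal element $m$ lies on $\cgC$. The fix is one line and uses an ingredient you already have: every vertex of $\cgC$ lies on a witnessing path from $y''$ to $z$, hence is $\ge y''\ge y$ in $P$, while every element reached by descending from $x$ is $\le x$; so touching $\cgC$ at any vertex $v$ would give $y\le v\le x$, contradicting $x\parallel y$. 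Thus the descent never meets $\cgC$ at all, $m$ is strictly interior, and your $\AFB$ contradiction goes through.

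Part~2, however, contains a genuine gap: what you have written is a plan whose central step---the ``translation between the depth-first definition and this embedding datum''---is precisely the assertion to be proved, and you defer it rather than carry it out. Moreover, the invariant you propose is not correct as stated. The shadow $M[x]=\Min(P)\cap D_P[x]$ need not be an interval of the linear order obtained by cutting $\cgE$, and non-crossing of witnessing paths only prevents the shadows from \emph{interleaving cyclically}; it does not prevent nesting. For example, with minimal elements $m_1<m_2<m_3<m_4$ in the cut order one can have $M[x]=\{m_1,m_4\}$ and $M[y]=\{m_2,m_3\}$ with $x\parallel y$ and both below $z$: these sets are non-crossing on $\cgE$, yet neither shadow is ``to the left of'' the other, so your criterion is silent exactly where the proposition must speak. (This wrap-around/nesting phenomenon is the same issue the paper later handles via its Type~$1$ versus Type~$2$ dichotomy in Lemma~\ref{lem:7_enough}.) Likewise, when $M[x]=M[y]$---perfectly possible for incomparable $x,y$---the shadows carry no information, and your fallback (``decided at the vertex where the upward paths diverge'') is a second, undeveloped argument rather than a completion of the first. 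A workable route would be to reduce to minimal common upper bounds $w\le z$ and $w'\le z'$ of $\{x,y\}$, observe that witnessing paths from $x$ and $y$ up to two distinct such bounds are pairwise internally disjoint (any common point would be a smaller common upper bound), and then run a Jordan-curve/rotation argument on the resulting closed curve, invoking your Part~1 (hence $\AFB$) to exclude the enclosed configurations that make the statement fail for general planar posets; none of this is present in your text, so Part~2 remains unproven.
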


\section{Proof of our Main Theorem}\label{sec:proof}

In this section, we prove Theorem~\ref{thm:main}, i.e., we show
that if $P$ is an $\AFB$-poset, then $\dim(P)\le 6$.  
Our first step is to reduce the problem to a somewhat simpler one.

\medskip
\noindent
\textbf{Reduction.}\quad 
To show that the dimension of any $\AFB$-poset is at most $d$, it
suffices to show that whenever $\mmD$ is an $\AFB$-diagram for 
a poset $P$, the set of all incomparable pairs in $\Min(P)\times P$
can be covered by~$d-1$ reversible sets.  

\begin{proof}
Let $P$ be an $\AFB$-poset.
To show that $\dim(P)\le d$, we need to show that there
is a covering of the set of all incomparable pairs of $P$ by
$d$ reversible sets.  Let $\mmD$ be an $\AFB$-diagram
for $P$.  We will now show that $\mmD$ can be modified into 
an $\AFB$-diagram $\mmD'$ for a poset $P'$ such that:

\begin{enumerate}
\item $P'$ contains $P$ as a subposet.
\item If $(x,y)$ is an enclosed pair in $P$, then
$(x,y)$ is an enclosed pair in $P'$.
\item If $(x,y)$ is an incomparable pair of $P$ and
is not an enclosed pair in $P$, then 
%either $(x,y)$ is an enclosed pair in $P'$, or
%Where does this come in? -Stephen
 there is a 
minimal element $x'\in P'$ with $x'\le x$ in
$P'$ and $x'\parallel y$ in $P'$.  
\end{enumerate}

We will then let $S_0$ consist of all incomparable pairs
$(x,y)$ in $P$ such that $(x,y)$ is an enclosed pair in
$P'$.  The set $S_0$ is reversible by Lemma~\ref{lem:enclosed},
and it contains all enclosed pairs in $P$.
It remains to consider the incomparable pairs in $P$ which are not enclosed in $P'$.
In view of the third condition for $P'$, if the
incomparable pairs $(x',y)\in\Min(P')\times P'$ in $P'$ can be covered
by $d-1$ reversible sets, it follows that the set of all
incomparable pairs of $P$ can be covered by~$d$ reversible
sets.  So it only remains to explain how the poset $P'$ 
should be constructed from $P$.  

Let $S$ be the set of all incomparable pairs of $P$ which
are not enclosed pairs  in $P$ and do not belong to $\Min(P)\times P$.  
If $S=\emptyset$, simply take $\mmD'=\mmD$ and
$P'=P$.  So we may assume that $S\neq\emptyset$.  Let $r=|S|$ and
let $S=\{(x_i,y_i):1\le i \le r\}$ be an arbitrary labeling
of the pairs in $S$.

To initialize a recursive construction, we set $\mmD_0=\mmD$
and $P_0=P$.  We will now explain how to construct a sequence
$\{(\mmD_i,P_i):1\le i\le r\}$ such that for each
$i\in [r]$, $\mmD_i$ is an $\AFB$-diagram for
the poset $P_i$ where $M_i = \Min(P)$.  The construction will ensure that
$P_{i-1}$ is a subposet of $P_i$ and
$M_{i-1}$ is a subset of $M_i$ whenever
$1\le i\le r$.  Furthermore, for each $1\le j\le i\le r$, 
either $(x_j,y_j)$ is an enclosed pair in $P_i$ or there is a 
minimal element $x'_j\in M_j$ such that $x'_j\le x_j$ in 
$P_i$ and $x'_j\parallel y_j$
in $P_i$.  The $\AFB$-poset $P'$ is just $P_r$.

Now suppose that $0\le i < r$ and that we have defined the
$\AFB$-diagram $\mmD_i$ for $P_i$. We then consider the 
pair $(x,y)=(x_{i+1},y_{i+1})$.  
Let $x'$ be the uniquely determined element of $P$ which
is lowest point in the plane and satisfies 
$x'\le x$ in $P_i$ and $x'\parallel y$ in $P_i$.  
If $(x',y)$ is an enclosed pair in $P_i$, so is $(x,y)$.
Accordingly, if $x'\in\Min(P_i)$, or $(x',y)$ is an enclosed pair,
we simply take $\mmD_{i+1}=\mmD_i$ and $P_{i+1}=P_i$.

Now suppose $(x',y)$ is not an enclosed pair in $P_i$, and 
$x'$ is not a minimal element in $P_i$.
It follows that $x'$ covers one or
more elements in $P_i$.  We claim that $x'$ has a unique lower
cover.  Suppose to the contrary that $x'$ covers
distinct elements $u$ and $u'$ in $P_i$.  In view of our
choice of $x'$, we know $u,u'\in D_P[y]$.  However, since $(x',y)$ 
is not an enclosed pair in $P_i$ and $u\parallel u'$, the induced $\AFB$-diagram of the subposet of $P_i$ determined  by $\{u,u',x',y\}$ violates
Proposition~\ref{pro:C2}.  The contradiction shows that $x'$ covers
a unique point $u$ as claimed.  

Our choice of $x'$ implies that $u<y$ in $P_i$.
We consider the first edge of a witnessing path $W[u,y]$ and
the edge $ux'$.  The construction for $\mmD_{i+1}$ depends on which
of these two edges is left of the other at $u$.  In
Figure~\ref{fig:dirty_trick}, we show the first edge of 
$W[u,y]$ on the left, so the following discussion will
be reversed if the edge $ux$ is on the left.

Starting with $u$ and traveling down in the diagram, we always
proceed to the right-most lower cover until we reach a minimal element of $P$.  In Figure~\ref{fig:dirty_trick},
we suggest that this would result in the chain $(u>v>z>w)$ and
it should be clear how the following details should be modified
if the actual chain is of a different length.

Starting just above $u$ and headed downward, we insert new
points very close to the existing vertices---together with
intermediate vertices to ensure that the resulting figure
is a diagram.  This results in a new minimal element $x''$ with
$x''<x'\le x$ in $P_{i+1}$ and $x''\parallel y$ in $P_{i+1}$.
Again, we refer to Figure~\ref{fig:dirty_trick} as an example
for how these changes are to be made. Note that no new comparabilities are introduced among the points of $P_{i-1}$ with the addition of these new points.

\begin{figure}
\begin{center}
\includegraphics[scale=.6]{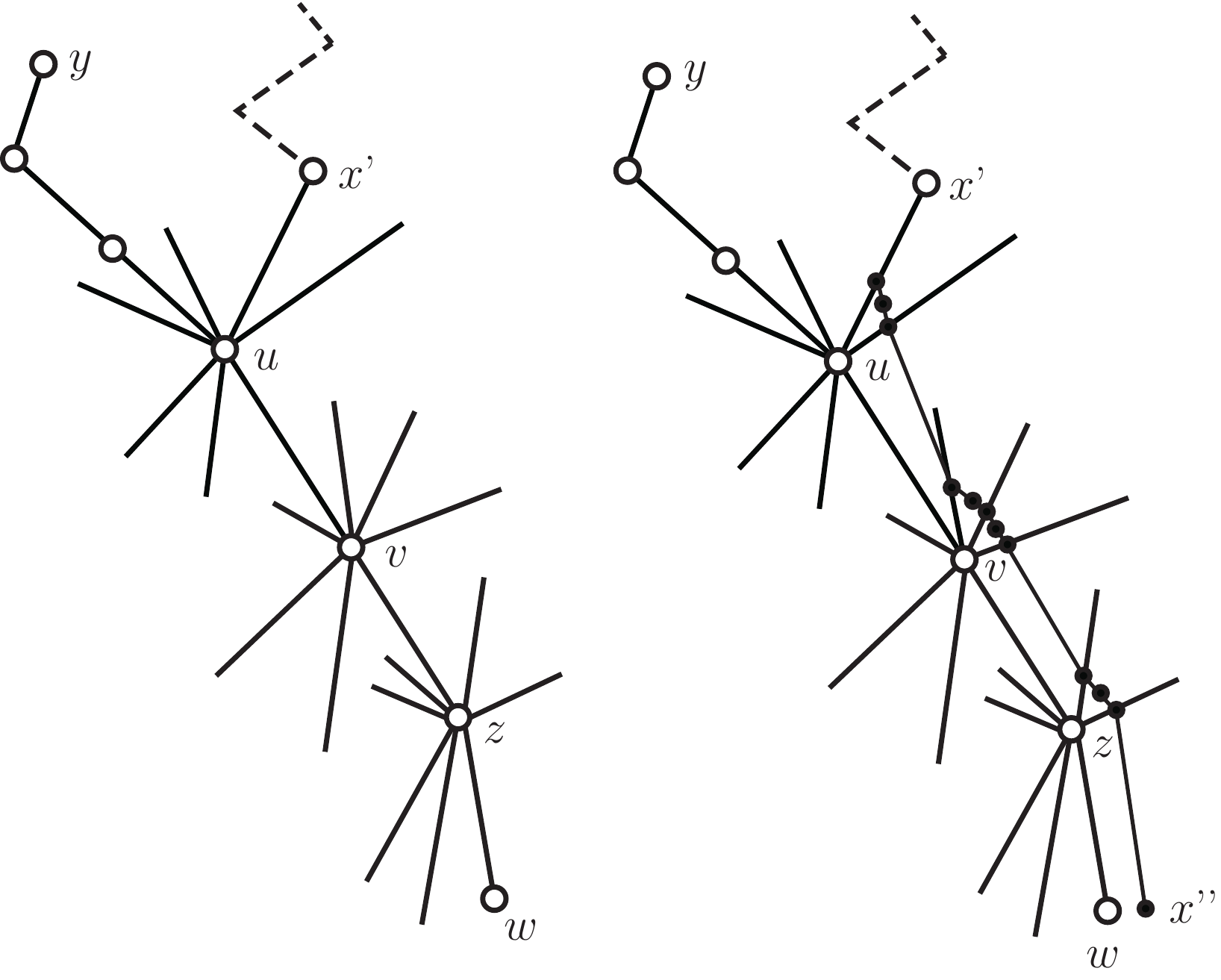}

\end{center}
\caption{The Construction for the Reduction}
\label{fig:dirty_trick}
\end{figure}

With this construction in hand, the proof for the reduction is complete.
\end{proof}

Given an $\AFB$-diagram for a poset $P$,
we know of no simple argument to show that the set of
incomparable pairs from $\Min(P)\times P$
can be covered with a bounded number of reversible sets, but
in time, we will show that $5$ are enough.  With the reduction,
this completes the proof that $\dim(P)\le6$ when $P$ is
an $\AFB$-poset.  However, to simplify the proof, we will first
prove a weaker result asserting that the set of incomparable pairs
from $\Min(P)\times P$ can be covered by~$7$ reversible sets.
The slight modification necessary to lower~$7$ to $5$ will
be presented later.

\begin{lemma}\label{lem:7_enough}
Let $\mmD$ be an $\AFB$-diagram for a poset $P$. Then
the set of all incomparable pairs of $P$ in $\Min(P)\times P$
can be covered by~$7$ reversible sets.
\end{lemma}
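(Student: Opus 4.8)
The plan is to construct seven linear extensions by exploiting the $\AFB$ structure. The key insight is that for a minimal element $x$ and an arbitrary element $y$ with $(x,y) \in \Inc(P)$, I need to reverse this pair, meaning I need some linear extension placing $x$ above $y$. Since $x$ is minimal and accessible from below, the downward vertical ray from $x$ reaches the exterior region, which gives me geometric room to maneuver $x$ upward past $y$. My strategy is to partition $\Min(P) \times P$ incomparable pairs based on the relative planar position of $x$ and $y$, using a classification analogous to the left/right/inside/outside scheme from Theorem~\ref{thm:planar-0}, but adapted to the situation where $x$ ranges over minimal elements rather than lying above a common zero.

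**First I would** build the simply closed curve $\cgE$ alluded to at the end of Section~\ref{sec:prelim}: a curve through all minimal elements of $P$ with every other element in its interior. Using the $\AFB$ property, each minimal element can be connected along exterior arcs, and splicing these together (together with the right-most and left-most witnessing paths climbing from minimal elements) produces $\cgE$. This curve imposes a cyclic order on $\Min(P)$, which I would use to orient a left-to-right sweep. For each minimal element $x$, I consider the pairs $(x,y)$ and classify $y$ as lying to the left of $x$, to the right of $x$, or enclosed/reachable from $x$ via the structure inside $\cgE$. \textbf{The main obstacle} will be handling pairs $(x,y)$ where $y$ is comparable to some other minimal element but the planar position of $x$ relative to $y$ is ambiguous; here I expect to need a careful case analysis using Proposition~\ref{pro:C2} applied to subposets $D_P[z]$ for various $z \ge y$.

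**Next I would** produce the seven reversible sets concretely. I anticipate a scheme roughly as follows: two linear extensions obtained by depth-first search from $\cgE$ with left-to-right and right-to-left preferences (reversing ``right'' and ``left'' pairs respectively, in the spirit of the $\cgR \cup \cgO$ and $\cgL \cup \cgO$ decomposition); additional extensions to handle the ``inside'' pairs, which are trickier because a minimal element $x$ can be inside $y$ relative to one up-set but the minimal element has no zero above it to anchor the classification. To verify reversibility of each proposed set, I would invoke Lemma~\ref{lem:ac}: assume a strict alternating cycle $\{(x_i, y_i)\}$ exists within the set, with each $x_i \in \Min(P)$, and derive a contradiction by tracking a monotone geometric quantity around the cycle—for instance, showing that the planar heights of the $y_i$ (or the positions where witnessing paths cross $\cgE$) must strictly decrease cyclically, exactly as in the proof of Lemma~\ref{lem:enclosed}.

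**The hard part will be** getting the count down to seven cleanly: the naive left/right/inside/outside classification combined with the need to reverse both coordinates may suggest more sets than necessary, so I expect the real work lies in showing that several of these categories can be merged into a single reversible set. I would prove such merges by verifying that no strict alternating cycle can mix pairs from the combined categories, again via the monotonicity-around-the-cycle argument. Since the lemma only claims seven (not the optimal five), I have slack, so I would aim for a transparent partition even if slightly wasteful, deferring the sharper bound to the promised later refinement.
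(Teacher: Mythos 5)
Your overall framework matches the paper's: build the envelope $\cgE$ through $\Min(P)$, use the cyclic order it induces, and kill strict alternating cycles by tracking a monotone geometric quantity. But the proposal stops exactly where the real work begins, and the missing piece is not a routine case analysis. The central difficulty, which your plan does not identify, is this: for an element $y$, the set $M[y]=\Min(P)\cap D_P[y]$ occupies a contiguous arc of $\cgE$, but the left-to-right order of $M[y]$ in $D_P[y]$ need not agree with the order along that arc --- it can be a cyclic rotation of it. The paper calls $y$ Type~$1$ or Type~$2$ according to whether the two orders agree or are rotated, and the entire classification (eight labels $1A,1B,1C,2A,\dots,2E$, grouped into six sets, with the Type~$2C$ set further split into left-biased and right-biased halves) is driven by that dichotomy. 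Your proposed left/right/inside/outside scheme has no analogue of this rotation phenomenon, so you cannot yet write down a concrete candidate partition, let alone verify that seven sets suffice.

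Second, your suggestion to handle the ``left'' and ``right'' classes by depth-first searches rooted at $\cgE$ does not transfer from the zero case: there is no common zero to anchor the search, and the pairs that are actually easy here are not left/right pairs but the pairs $(x,y)$ where $x$ falls entirely before or entirely after $M[y]$ in the envelope order (the paper's $\cgS_1$ and $\cgS_2$), which are reversed by a one-line monotonicity argument in the linear order $L$ along $\cgE$. The genuinely hard classes --- Types $1B$, $2B$, $2D$, and above all $2C$ --- each require a region-nesting argument (showing that the regions $\cgR(y_i,\cdot,\cdot)$ strictly shrink around a strict alternating cycle), and Type~$2C$ is provably not reversible as a single set (see Figure~\ref{fig:top}), which forces the biased/unbiased splitting that accounts for the seventh set. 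None of this is present or anticipated in the proposal, so I would count it as having the right setup but a genuine gap at the point where the lemma is actually proved.
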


\begin{proof}
Clearly, it is enough to prove the lemma when $P$ is connected
and has at least two minimal elements.  We let $\cgS_0$ denote
the set of all incomparable pairs in $\Min(P)\times P$, and we 
abbreviate the set $\Min(P)$ as $M$.

Since $\mmD$ is an $\AFB$-diagram, it is easy to see that there
is a simple closed curve $\cgE$ in the plane satisfying the
following requirements:

\begin{enumerate}
\item All elements of $M$ are on $\cgE$.
\item All elements of $P-M$ are in the interior of $\cgE$.
\item If $x$ covered by $y$ in $P$, then all points of the plane
which are on the covering edge from $x$ to $y$ in the diagram
are in the interior of $\cgE$, except $x$ when $x\in M$.
\end{enumerate}

We illustrate such a curve in Figure~\ref{fig:envelope}
where we show $\cgE$ using dashed lines.  We find it
natural to refer to $\cgE$ as an \textit{envelope for $P$}.

\begin{figure}
\begin{center}
\includegraphics[scale=.6]{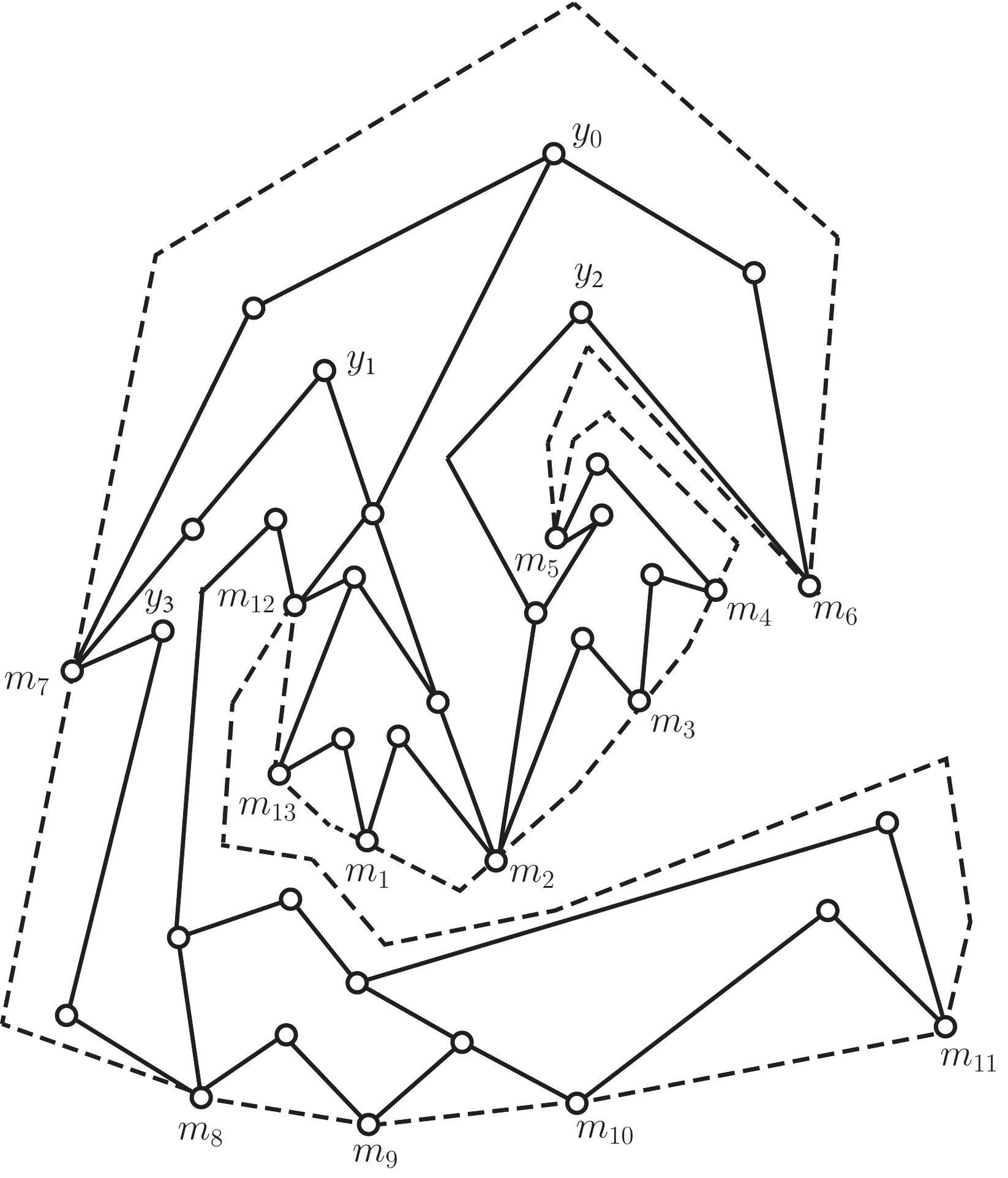}

\end{center}
\caption{An Envelope for an $\AFB$-poset}
\label{fig:envelope}
\end{figure}

Starting at an arbitrary minimal element $m_1$, we label the elements
of $M$ as they appear in a counter-clockwise traversal of $\cgE$ to
obtain a linear order \[L= [m_1<m_2<\dots<m_n]\] on $M$.  For
each element $y\in P$, we let $M[y]=M\cap D_P[y]$.

We will make repeated use of the following elementary proposition.
In fact, a stronger result holds, but this is the exact form we need.

\begin{proposition}\label{pro:Z}
Let $y\in P$, and let $m$ and $m'$ be distinct elements
of $M$ with $m,m'\in D_P[y]$.
Then let $Z$ be the subposet of $P$ consisting of all elements
$z\in P$ with $z\in D_P[y]$ such that $m,m'\in D_P[z]$.
Then the subposet $Z$ has a unique minimal element
which we will denote $z(y,m,m')$.
\end{proposition}

We will also make repeated use of a construction that produces
simple closed curves and regions in the plane.  Again,
let $y\in P$ and let $(m,m')$ be an \emph{ordered}
pair of distinct elements of $M[y]$.
Form a path $\cgE[m,m']$ by traversing the simple 
closed curve $\cgE$ in a counter-clockwise direction starting at $m$ and
stopping at $m'$.  Now $\cgE[m,m']$ and $\cgE[m',m]$ share
only $m$ and $m'$ as endpoints. Their union is
the entire curve $\cgE$.

Let $z=z(y,m,m')$.  We then take witnessing paths from
$m$ and $m'$ to $z$ using the following convention:
If $m$ is left of $m'$ in $D_P[z]$, then we take
$W[m,z]$ as the right-most witnessing path from $m$ to
$z$, while we take $W[m',z]$ as the left-most path from
$m'$ to $z$.  These conventions are reversed if $m'$
is left of $m$ in $D_P[z]$.

In either situation, the two witnessing paths $W[m,z]$ and
$W[m',z]$ together with the path $\cgE[m,m']$ form a simple
closed curve which we denote $\cgC(y,m,m')$.  Also,
we let $\cgR(y,m,m')$ denote the region in the
plane enclosed by $\cgC(y,m,m')$.  Note that $y$ is on
$\cgC(y,m,m')$ when $y=z(y,m,m')$.  However, when $y\neq
z(y,m,m')$, $y$ is in the exterior of $\cgR(y,m,m')$ when
$m$ is left of $m'$ in $D_P[y]$, and $y$ is in the
interior of $\cgR(y,m,m')$ when $m$ is right of $m'$ in
$D_P[y]$.

Now back to the argument for covering $\cgS_0$ with
$7$ reversible sets.  We will use the linear order $L$ to 
label the incomparable pairs in $\cgS_0$ using the 
following~$8$ labels:

\[
1A\quad 1B\quad 1C \quad 2A\quad 2B\quad 2C \quad 2D \quad 2E.
\]
The integer part of the label applied to a pair $(x,y)$ depends
only on $y$ while the letter in the label depends on both
$x$ and $y$. 

Let $y$ be an element of $P$.  Then the elements of
$M[y]$ are linearly ordered from left-to-right in 
$D_P[y]$.  We let $s(y)$ and $t(y)$ denote, respectively, the least 
element and the greatest element of $M[y]$ in this linear order.
Let $|M[y]|=r$ and let
$[u_1<u_2<\dots<u_r]$ be the left-to-right order on $M[y]$ in $D_P[y]$,
so that $s(y)=u_1$ and $t(y)=u_r$.

However, the elements of $M[y]$ are also linearly
ordered in $L$.  Now we let $a(y)$ and $b(y)$ denote,
respectively, the least element and the greatest element
of $M[y]$ in $L$.  Since the
envelope $\cgE$ is traversed in a counter-clockwise manner,
it is easy to see that $y$ can be characterized as one
of two types, since exactly one of the following two statements holds
for $y$:

\medskip
\noindent
\textbf{Type 1.}\quad $u_1<u_2<\dots<u_r$ in $L$.

\medskip
\noindent
\textbf{Type 2.}\quad
There is an integer $j$ with $1< j\le r$ such that:\\
$u_j<u_{j+1}<\dots<u_r<u_1<u_2<\dots<u_{j-1}$ in $L$.

\medskip
We note that an element $y\in P$ is Type~$1$ when $|M[y]|=1$.
In general, when
$y$ is Type~$1$, $a(y)=s(y)\le t(y)=b(y)$ in $L$.  When
$y$ is Type~$2$, $a(y) \le t(y)< s(y)\le b(y)$ in $L$.
Also, we observe that either $a(y)=b(y)$ or
$a(y)$ is left of $b(y)$ in $D_P[y]$ when
$y$ is Type~$1$.  However, $a(y)$ is right of $b(y)$ in
$D_P[y]$ when $y$ is Type~$2$.

Now let $(x,y)$ be a pair in $\cgS_0$.   If $y$ is Type~$1$,
we will say that $(x,y)$ is 
Type~$1A$ if $x<a(y)$ in $L$; Type~$1B$ if $a(y)<x<
b(y)$ in $L$; and Type~$1C$ if $x>b(y)$ in $L$.
In Figure~\ref{fig:envelope}, the elements $y_2$ and $y_3$
are Type~$1$.  The pairs $(m_1,y_2)$ and $(m_5,y_3)$ are
Type~$1A$;  the pairs $(m_3,y_2)$ and $(m_5,y_2)$ are Type~$1B$;
and the pairs $(m_8,y_2)$ and $(m_{12},y_3)$ are Type~$1C$.

When $y$ is Type~$2$, we say
the pair $(x,y)$ is Type~$2A$ if $x<a(y)$ in $L$;
Type~$2B$ if $a(y)<x<t(y)$ in $L$; Type~$2C$ if $t(y)<x<s(y)$ in $L$;
Type~$2D$ if $s(y)<x<b(y)$ in $L$; and Type~$2E$ if $x>b(y)$ in $L$.
In Figure~\ref{fig:envelope},
the elements $y_0$ and $y_1$ are Type~$2$.  Now $(m_1,y_0)$ and
$(m_1,y_1)$ are Type~$2A$; $(m_4,y_0)$ is Type~$2B$;
$(m_6,y_1)$ is Type~$2C$; $(m_{8},y_0)$ and $(m_{10}, y_1)$ are
Type~$2D$; and $(m_{13},y_0)$ and $(m_{13},y_1)$ are Type~$2E$.

We then define a covering of $\cgS_0$ by six sets
defined as follows:

\begin{enumerate}
\item $\cgS_1$ consists of all Type~$1A$ and~$2A$ pairs.
\item $\cgS_2$ consists of all Type~$1C$ and~$2E$ pairs.
\item $\cgS_3$ consists of all Type~$1B$ pairs.
\item $\cgS_4$ consists of all Type~$2B$ pairs.
\item $\cgS_5$ consists of all Type~$2D$ pairs.
\item $\cgS_6$ consists of all Type~$2C$ pairs. 
\end{enumerate}

We pause to examine the $\AFB$-poset shown in Figure~\ref{fig:top}
just to understand that there are obstacles to overcome
in covering $\cgS_0$ by a small number of reversible sets.
Referring to Figure~\ref{fig:top}, the set $\cgS_1\cup\cgS_2$
need not be reversible since $(x_4,y_4)$ is Type~$1A$ and
$(x_5,y_5)$ is Type~$1C$, but together these form a strict alternating cycle.
Also, $(x_1,y_1)$ and $(x_2,y_2)$ are Type~$2C$ while
$(x_3,y_3)$ is Type~$2B$.  No reversible set can contain
any two of these three pairs so $S_4 \cup S_6$ is not reversible and neither is $S_6$.

\begin{figure}
\begin{center}
\includegraphics[scale=.6]{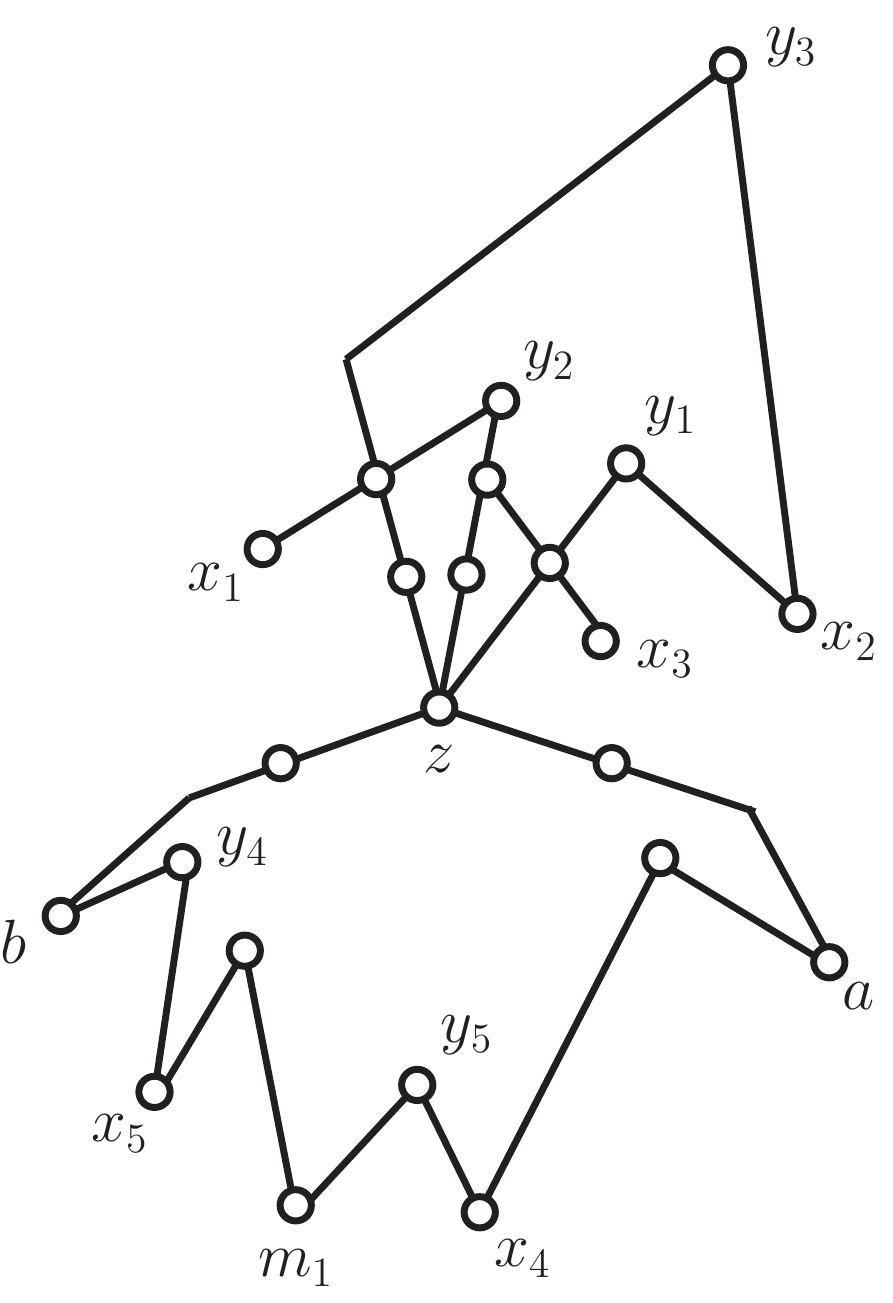}

\end{center}
\caption{Challenges in Reversing Pairs in $\cgS_0$}
\label{fig:top}
\end{figure}

Despite these challenges,
the proof of Lemma~\ref{lem:7_enough} and the proof of
the (weak) upper bound $\dim(P)\le 8$ will be complete once 
we have verified the following claim.

\medskip
\noindent
\textbf{Claim 2.}\quad
Each of the sets in the family $\{\cgS_1, \cgS_2, \cgS_3,\cgS_4,\cgS_5\}$ 
is reversible.  Furthermore, the set $\cgS_6$ can be covered by
two reversible sets.

\begin{proof} We will examine one set at a time, grouping sets with symmetric arguments.

\smallskip
\noindent
\textbf{Case $\cgS_1$ ($\cgS_2$).} We will first 
give a proof by contradiction to show that $\cgS_1$ is reversible.
The argument for $\cgS_2$ is symmetric.  Suppose to the contrary
that $\cgS_1$ is not reversible.  Let $S=\{(x_i,y_i):
1\le i\le k\}$ be  a strict alternating cycle contained
in $\cgS_1$.  For each $i\in[k]$, let
$a_i=a(y_i)$, the least element of $M[y_i]$ in the linear order $L$.

For each $i\in [k]$, since
$(x_i,y_i)\in\cgS_1$, we know that $x_i<a_i$ in $L$.
On the other hand, we know that $x_i\le y_{i+1}$ in $P$.
Therefore $a_{i+1}\le x_i$ in $L$.  In turn, this
implies $a_{i+1}<a_i$ in $L$.  Clearly, this statement cannot
hold for all $i\in[k]$.  The 
contradiction completes the proof for this part of the
claim.

\smallskip
\noindent
\textbf{Case $\cgS_3$.}
Now we give a proof by contradiction to show that the
set $\cgS_3$ of Type~$1B$ pairs is reversible. 
This argument will be more substantive
than the preceding case.  Suppose that $S=\{(x_i,y_i):1\le i\le k\}$
is a strict alternating cycle of pairs from $\cgS_3$.  For each 
$i\in[k]$, we let $a_i=a(y_i)$, $b_i=b(y_i)$, $s_i=s(y_i)$
and $t_i=t(y_i)$.  Since $(x_i,y_i)\in \cgS_3$, we know \[a_i=s_i<x_i<
t_i=b_i \quad \text{in }L.\]  Furthermore, we know $a_i$ is left of $b_i$ in $D_P[y_i]$.  
Let $z_i=z(y_i,a_i,b_i)$, $\cgE_i=\cgE[a_i,b_i]$,
$\cgC_i=\cgC(y_i,a_i,b_i)$ and $\cgR_i= \cgR(y_i,a_i,b_i)$

Now let $i\in [k]$ be arbitrary.
Since $a_i<x_i<b_i$ in $L$, we know $x_i$ is on the
path $\cgE_i$.
Since $S$ is a strict alternating cycle, we know $x_i<y_{i+1}$ 
in $P$ and $y_i\parallel y_{i+1}$ in $P$.  Let $W[x_i,y_{i+1}]$ be an 
arbitrary witnessing path. Clearly, $y_{i+1}$ is not
a minimal element in $P$, so all points
in the plane on the witnessing path $W[x_i,y_{i+1}]$ \emph{except}
$x_i$ are in the interior of $\cgC_i$.

Next we consider the set $M[y_{i+1}]$ which includes $x_i$.
We assert that all elements of 
$W[y_{i+1}]$ are on $\cgE_i$. To see this, suppose $u\in M[y_{i+1}]$, 
and $u$ is not on $\cgE_i$.  Then $u$ is in the exterior of $\cgC_i$.  
Let $W[u,y_{i+1}]$ be an arbitrary witnessing path.  Then this 
path must intersect the boundary of $\cgC_i$, and this forces $u<y_i$ 
in $P$, which is false.  The contradiction confirms our assertion.  

We conclude that:

\begin{equation}\label{eqn:halting}
a_i\le a_{i+1}\text{\quad and \quad} b_{i+1}\le b_i\text{\quad in $L$}.
\end{equation}

Of course, we also know that $a_{i+1}\le b_i$ in $L$, but we elect
to write the two inequalities in~\eqref{eqn:halting} in a weak form.
Since $i\in [k]$ was arbitrary, these inequalities hold for all $i\in[k]$.
We conclude that are minimal elements $a_0$ and $b_0$ so
that $a_i=a_0$ and $b_i=b_0$ for each $i\in[k]$.
The rules for determining  $z_i$ and the witnessing paths
$W[a_0,z_i]$ and $W[b_0,z]$ force
$\cgR_{i+1}$ to be a \emph{proper} subset of $\cgR_i$.  Clearly, this
is a contradiction since the strict set inclusion statement cannot
hold for all $i\in[k]$.  This completes the proof that
the set $\cgS_3$ consisting of all Type~$1B$ pairs is reversible.

\smallskip
\noindent
\textbf{Case $\cgS_4$ ($\cgS_5$).}
Next, we argue by contradiction that the set $\cgS_4$ of all
Type~$2B$ pairs is reversible.  The argument for the set
$\cgS_5$ of all Type~$2D$ pairs is symmetric. Suppose to the
contrary that $\cgS_4$ is not reversible, and let
$S=\{(x_i,y_i):1\le i\le k\}$ be a strict alternating cycle
contained in $\cgS_4$.  We use the same abbreviations as
in the preceding case for $a_i$, $b_i$, $s_i$ and $t_i$.
Since $(x_i,y_i)$ is Type~$2B$, we know $a_i<x_i<t_i<s_i\le b_i$ in $L$.

For each $i\in[k]$, we set $z_i=z(y_i,a_i,t_i)$,
$\cgE_i=\cgE[a_i,t_i]$, $\cgC_i=\cgC(y_i,a_i,t_i)$ and
$\cgR_i=\cgR(y_i,a_i,t_i)$.  It follows that $y_{i+1}$ is
in the interior of $\cgR_i$.  We now assert that
all points of $M[y_{i+1}]$ come from $\cgE[a_i,b_i]$.  To
see this, let $u$ be an element of $M[y_{i+1}]$ which does
not belong to $\cgE(a_i,b_i)$.  Then $u$ is in the exterior
of $\cgR_i$, and a witnessing path $W[u,y_{i+1}]$ would
have to intersect $\cgC_i$.  This forces $u<y_i$ in $P$ so
that $u\in\cgE(a_i,b_i)$, as desired.  In turn, this implies
that inequality~\eqref{eqn:halting} holds.  Since this inequality
holds for all $i\in[k]$, we know there
are elements $a_0,b_0\in M$ so that
$a_i=a_0$ and $b_i=b_0$ for all $i\in[k]$.  

Now we assert that $t_{i+1}\le t_i$ for all $i\in[k]$.
To the contrary, suppose that $t_{i+1}$ does not belong
to $\cgE[a_0,t_i]$.  Let $W[t_{i+1},y_{i+1}]$ be any
witnessing path.  Then this path intersects $\cgC_i$.
Let $v$ be the unique element of $P$ which is lowest in the
plane, and is common to $W[t_{i+1},y_{i+1}]$ and the  boundary
of $\cgC_i$.  Clearly, $v<z_i$ in $P$.  If $v$ is on
$W[a_0,z_i]$, we conclude that $a_0<t_{i+1}<t_i$ in the
left-to-right order in $D_P[y_i]$. This would imply that
$t_{i+1}<t_i$ in $L$ which is false.  We are left to
conclude that $v\in W[t_i,z_i]$ so that $a_0<t_i<t_{i+1}$
in $D_P[y_i]$, which contradicts the definition of $t_i$.
We conclude that our assertion that $t_{i+1}\le t_i$ in $L$
is correct.  Since $S$ is a strict alternating cycle, we 
know that there is a point $t_0\in M$ so that $t_i=t_0$ 
for all $i\in[k]$.

Now the same argument used in proving that the set $\cgS_3$
of all Type~$1B$ pairs is reversible shows that region
$\cgR_{i+1}$ is a proper subset of $\cgR_i$.  Clearly,
this statement cannot hold for all $i\in[k]$, and
this completes the proof that the set $\cgS_4$ consisting
of all Type~$2B$ pairs is reversible.

\smallskip
\noindent
\textbf{Case $\cgS_6$.} Now we turn to the last statement of Claim~$2$ where we must 
prove that the set $\cgS_6$ of all Type~$2C$ pairs can be covered by
two reversible sets.  Note that two Type~$2C$ pairs
in Figure~\ref{fig:top} shows that $\cgS_6$ may not
be reversible.

Let $(x,y)$ be a Type~$2C$ pair, and let $a=a(y)$, $b=b(y)$
and $z=z(y,a,b)$.  We will say that
$(x,y)$ is \textit{left-biased} if there
is a Type~$2$ element $y'\in P$ such that (1$'$)~$a(y')=a$ and
$b(y')=b$; (2$'$)~$z(y',a,b) = z$; and (3$'$)~$x$ is left of $b(y)$ in
$D_P[y']$.  Similarly, we will say that $(x,y)$ is 
\textit{right-biased} if there is an a Type~$2$ element
$y''$ satisfying (1$''$)~$a(y'')=a$ and
$b(y'')=b$; (2$''$)~$z(y'',a,b) = z$; and also (3$''$)~$x$ is right of $a$
in $D_P[y'']$.

We assert that there is no Type~$2C$ pair $(x,y)$ which is
both left-biased and right-biased.  If this were to happen,
we observe that $a,b,z$ belong to both $D_P[y']$ and 
$D_P[y'']$.  We would require that $x<a<b$ in the left-to-right
order on $D_P[y']$ and $a<b<x$ in the left-to-right order in
$D_P[y'']$.  In particular, both the pairs $(a,x)$ and
$(b,x)$ violate Proposition~\ref{pro:C2}.  This proves
that the assertion is correct.

We now show that the set $\cgS'$ of all Type~$2C$ pairs
which are \emph{not} right-biased is reversible.  The argument
to show that the set $\cgS''$ of all Type~$2C$ pairs which
are \emph{not} left-biased is symmetric.  Once this has been
accomplished, the proof that the set $\cgS_6$ consisting
of all Type~$2C$ pairs can be covered by two reversible
sets will be complete.

We argue by contradiction and let $S=\{(x_i,y_i):1\le i\le k\}$
be a strict alternating cycle of Type~$2C$ pairs, none of which
are left-biased.  For each $i\in [k]$, we use the now standard 
abbreviations $a_i,b_i,s_i,t_i$. We then take $z_i=z(y_i,a_i,b_i)$,
$\cgE_i=\cgE[a_i,b_i]$, $\cgC_i=\cgC(y_i,a_i,b_i)$ and
$\cgR_i=\cgR(y_i,a_i,b_i)$.

Arguments just like those applied earlier show that 
inequality~\eqref{eqn:halting} holds.  Therefore,
there are elements $a_0,b_0\in M$
such that $a_i=b_0$ and $b_i=b_0$ for all $i\in[k]$.

Now let $i\in [k]$ be arbitrary.  
We then observe that
$\cgR_{i+1}$ is a proper subset of $\cgR_i$ \emph{unless}
$z_{i+1}=z_i$.  In this case, $\cgR_{i+1}=\cgR_i$.  It
follows that there is an element $z_0\in P$ and a simple
closed curve $\cgC_0$ enclosing a region $\cgR_0$ so that
$z_i=z_0$, $\cgC_i=\cgC_0$ and $\cgR_i=\cgR_0$ for all
$i\in[k]$.

After a relabeling if necessary, we may assume that $s_1\le s_i$
in $L$ for each $i\in[k]$.   Then
$t_1<x_1<s_1$ in $L$. Since $x_1<y_2$ in $P$, either $s_2 <x_1<b_0$ in $L$ or $a_0<x_1<t_2$ in $L$.  If $s_2\le x_1<b_0$ 
in $L$, then $s_2<s_1$ in $L$ which is false.  We conclude that
$a_0<x_1\le t_2$ in $L$.  Therefore, $x_1$ is right of $a_0$
in $D_P[y_2]$.  This shows that $(x_1,y_1)$ is right-biased.
The contradiction completes the proof.
\end{proof}

As promised, we now show how to improve Claim~$2$ by
showing the set $\cgS_0$ can be covered by~$5$ reversible
sets.  This will be accomplished by proving the following
two claims.

\medskip
\noindent
\textbf{Claim 3.}\quad
The set $\cgS_3\cup\cgS_4$ of all pairs which are either
Type~$2B$ or Type~$2D$ is reversible.

\medskip
\noindent
\textbf{Claim 4.}\quad
The set $\cgS_7$ consisting of all pairs which are
either Type~$1B$ or Type~$2C$ but not right-biased
is reversible.

\begin{proof}
We first show by contradiction that $\cgS_3\cup\cgS_4$
is reversible.  Let $S=\{(x_i,y_i):1\le i\le k\}$ be
a strict alternating cycle of pairs from $\cgS_3\cup\cgS_4$
In view of our earlier arguments, there must be at
least one pair in $S$ of Type~$2B$ and at least one
pair of Type~$2D$.

The abbreviations $a_i,b_i,s_i,,t_i$ are just
as before.  Now we know that $a_i\le t_i<s_i\le b_i$ in $L$.
Furthermore, if $(x_i,y_i)$ is Type~$2B$, we know
$a_i<x_i<t_i$ in $L$, and if $(x_i,y_i)$ is Type~$2D$, we know
$s_i<x_i<b_i$ in $L$.

Now let $i\in [k]$. If $a_i=t_i$, we set $z_i=a_i$, and
we let $\cgR_i$ be the region in the plane consisting only
of the point $a_i$. If $a_i<t_i$ in $L$,
we set $z_i=z(y_i,a_i,t_i)$, 
$\cgC_i=\cgC(y_i,a_i,b_i)$ and 
$\cgR_i=\cgR(y_i,a_i,t_i)$.  Analogously, 
if $s_i=t_i$, we set $v_i=b_i$ and we take $\cgT_i$ as
the region in the plane consisting only of the point
$b_i$.
If $s_i<b_i$ in $L$, we
let $v_i=z(y_i,s_i,b_i)$, $\cgD_i=
\cgC(y_i,s_i,b_i)$ and $\cgT_i=\cgR(y_i,s_i,t_i)$,

Repeating arguments already presented, we quickly
learn that there are elements $a_0,b_0,s_0,t_0\in M$, 
elements $z_0,v_0\in P$, simple closed curves $\cgC_0$
and $\cgD_0$ and regions $\cgR_0$, $\cgT_0$ so that
$a_i=a_0$, $b_i=b_0$, $z_i=z_0$, $v_i=v_0$,
$\cgC_i=\cgC_0$, $\cgR_i=\cgR_0$, $\cgD_i=\cgD_0$, and
$\cgT_i=\cgT_0$ for all $i\in[k]$.  

If $i\in [k]$ and $(x_i,y_i)$ is Type~$2B$, then
it is easy to see that $\cgR_{i+1}\subsetneq \cgR_i$ while
$\cgT_{i+1}=\cgT_i$.  Analogously, if $(x_i,y_i)$ is
Type~$2D$ then $\cgT_{i+1}\subsetneq \cgT_i$ while
$\cgR_{i+1}=\cgR_i$.  Clearly, these statements result in
a contradiction, so we have completed the proof that
$\cgS_3\cup\cgS_4$ is reversible.

Now we prove by contradiction  that $\cgS_7$, which
consists of all Type~$1B$ pairs and all Type~$2C$ pairs
which are not right-biased is reversible.  Let
$S=\{(x_i,y_i):1\le i\le k\}$ be a strict alternating
cycle contained in $\cgS_7$.  Then we know that
$S$ contains both a Type~$1B$ pair and a Type~$2C$ pair.

Now suppose that $i\in [k]$.  We set
$a_i=a(y_i)$, $b_i=b(y_i)$, $z_i=z(y_i,a_i,b_i)$,
$\cgC_i=\cgC(y_i,z_i,b_i)$ and $\cgR_i=\cgR(y_i,a_i,b_i)$.
Then $y_{i+1}$ is in the interior of $\cgR_{i}$
and all elements of $M[y_{i+1}]$ are on the path
$\cgE(a_i,b_i)$.  It follows that the inequalities in~\eqref{eqn:halting}
hold.  We conclude that there are elements $a_0,b_0\in M$
such that $a_i=a_0$ and $b_i=b_0$ for all $i\in[k]$.

Let $i$ and $j$ be integers in $[k]$ so
that $(x_i,y_i)$ is Type~$1B$ and $(x_j,y_j)$ is Type~$2C$.
Then $a_0$ is left of $b_0$ in $D_P[y_i]$ and
$a_0$ is right of $b_0$ in $D_P[y_j]$.  These statements
contradict Proposition~\ref{pro:C2}.  With these observations,
the proof of Claim~$3$ is complete.  This also completes the
proof of our main theorem.
\end{proof}
\end{proof}

\section{Closing Comments and Open Problems}\label{sec:close}

We pause to explain our motivation in studying the class of
$\AFB$-posets.  Let $P$ be a planar poset and let
$x_0$ be an arbitrary minimal element of $P$.  Then set
$A_0=\{x_0\}$ and  let $B_0$ consist of all elements $y$ in $P$
such that $y>x_0$ in $P$.
If $i\ge0$ and we have defined a sequence
$(A_0,B_0,A_1,B_1,\dots,A_i,B_i)$ of pairwise disjoint
subposets of $P$ and their union is a proper connected subposet 
$Q$ of $P$, we let $A_{i+1}$ consist of all elements $x\in P-Q$ for which
there is some $y\in B_i$ such that $x<y$ in $P$.  Also, when
$Q\cup A_{i+1}$ is a proper subposet of $P$, we take
$B_{i+1}$ as the set of all $y\in P-(Q\cup A_{i+1})$ for
which there is some $x\in A_{i+1}$ for which $x<y$ in $P$.

The resulting partition of $P$ is now known as an \textit{unfolding}
of $P$, and this concept has been used in several papers,
including~\cite{bib:StrTro}, \cite{bib:MicWie} 
and~\cite{bib:JoMiWi-1}.  The key feature for our purposes is 
that for all $i\ge0$, the subposet $B_i$ is an $\AFB$-poset, and the 
dual of the subposet $A_i$ is an $\AFB$-poset.
As is well known there is some $i\ge0$ for which:
\[
\max\{\dim(A_i\cup B_i),\dim(B_i\cup A_{i+1})\}\ge\dim(P)/2.
\]
It follows that when the dimension of $P$ is \emph{very} large,
we now know that there is a subposet of $P$ in which
the difficulty of the dimension problem has a ``bipartite flavor,''
i.e., the poset is the union of two relatively simple subposets,
one a down set and the other an up set and the challenge is to
reverse incomparable pairs of the form $(x,y)$ where $x$ is in the
down set and $y$ is in the up set. Reversing  the remaining incomparable
pairs takes at most~$6$ linear extensions.

Our motivation for this line of research has from the outset
been to develop machinery for attacking the following long-standing and
apparently quite challenging conjectures:

\begin{conjecture}\label{con:planar-Sd}
A planar poset with large dimension contains a large standard
example, i.e., for every $d\ge2$, there exists a constant $d_0$ so
that if $P$ is a planar poset and $\dim(P)\ge d_0$, then $P$
contains the standard example $S_d$ as a subposet.
\end{conjecture}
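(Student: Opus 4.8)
\medskip
\noindent
\textbf{Proof proposal.}\quad
The plan is to combine the unfolding decomposition described above with the structural control over $\AFB$-posets furnished by Theorem~\ref{thm:main}, first reducing the general planar problem to a two-layer one and then extracting a standard example from the ``bipartite'' complexity that survives the reduction. Fixing a planar poset $P$ and an arbitrary minimal element $x_0$, I would form the unfolding $(A_0,B_0,A_1,B_1,\dots)$ and choose the consecutive pair for which $\max\{\dim(A_i\cup B_i),\dim(B_i\cup A_{i+1})\}\ge\dim(P)/2$. This yields a subposet $Q$ that is the union of a lower part, whose dual is an $\AFB$-poset, and an upper part, which is an $\AFB$-poset, with every comparability between the parts directed upward and with $\dim(Q)\ge\dim(P)/2$. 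As noted in the discussion of unfoldings, Theorem~\ref{thm:main} and its dual guarantee that the incomparable pairs \emph{not} straddling the two parts can be covered by at most $6$ reversible sets; consequently the straddling pairs, those of the form $(x,y)$ with $x$ in the lower part and $y$ in the upper part, require at least $\dim(Q)-6$ reversible sets to cover. The conjecture is thereby reduced to a two-layer statement: if the straddling incomparabilities of such a $Q$ have large dimension, then $Q$ contains a large $S_d$.

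For the two-layer problem I would exploit the envelope and arc machinery of Lemma~\ref{lem:7_enough}, applied to an $\AFB$-diagram of the upper part, and (invoking the reduction of Section~\ref{sec:proof}) restrict attention to straddling pairs whose lower coordinate is a minimal element. Then $x\parallel y$ precisely when $x\notin M[y]$, and the Type~$1$/Type~$2$ dichotomy shows that each $M[y]$ occupies a circular arc of minimal elements in the counter-clockwise order on the envelope $\cgE$. Thus the straddling structure is governed by a circular-arc representation. Translating the target into arc language, a copy of $S_d$ corresponds to minimal elements $m_1,\dots,m_d$ and upper elements $y_1,\dots,y_d$ with $m_i\in M[y_j]$ if and only if $i\ne j$; that is, the arc of $y_j$ is a nearly-full arc missing only a small gap at $m_j$, so the $d$ arcs are pairwise crossing with distinct gaps. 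The extraction step is therefore to produce a large family of pairwise-crossing arcs in this canonical position and read off the standard example directly.

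This cannot be done from the arc data alone, and that is the heart of the matter. The $(2+2)$-free (interval-order) phenomenon---unbounded dimension with no $S_2$ whatsoever---shows that a purely combinatorial circular-arc representation can decouple large dimension from the presence of standard examples, so the proof must use planarity of the \emph{entire} diagram, not merely the cyclic arc data. Concretely, the arcs $M[y]$ are realized by genuine, non-crossing witnessing paths descending into the interior of $\cgE$, and Proposition~\ref{pro:C2} forbids certain nestings and inside/outside configurations that an abstract circular-arc order would allow. I would define a crossing relation among the upper elements and argue, through a plane-diagram bookkeeping of how the witnessing paths from many lower and upper elements may interleave without crossings, that large dimension forces a large pairwise-crossing family realized in the canonical position, yielding $S_d$.

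The hard part will be exactly this upgrade. Dimension is controlled through Lemma~\ref{lem:ac} by alternating cycles, which are essentially one-dimensional cyclic obstructions, whereas $S_d$ is a two-dimensional ``complete complement'' obstruction; bridging the gap requires a mechanism by which planarity converts cyclic complexity into crossing complexity, and no such mechanism is presently known. I expect a genuine proof to rest on a new planar Ramsey-type theorem producing a large crossing family directly from large dimension, with the resulting dependence of $d_0$ on $d$ likely being very fast-growing. As intermediate milestones I would first attempt the two-layer statement under a height or width bound on one of the two parts, and then the case in which one part is a single $\AFB$-poset, before confronting the full reduction.
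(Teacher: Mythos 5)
You have not proved this statement, and neither does the paper: Conjecture~\ref{con:planar-Sd} is presented there as a long-standing open problem (traced to page~119 of~\cite{bib:Trot-Book} and described as folklore), and the paper's own contribution, Theorem~\ref{thm:main}, is explicitly motivated as \emph{machinery} for attacking it, not as a resolution. So there is no proof to compare yours against, and your proposal, to its credit, is candid about the same point: you write that the mechanism converting large dimension into a large pairwise-crossing family ``is presently not known'' and would require ``a new planar Ramsey-type theorem.'' That missing mechanism is not a technical detail to be filled in later; it is the entire content of the conjecture. The unfolding reduction, the two-layer reformulation, and the envelope bookkeeping are all set-up; the step where alternating-cycle complexity (Lemma~\ref{lem:ac}) is upgraded to a complete bipartite complement $S_d$ is exactly where every known approach stops, so the proposal is a research plan rather than a proof.

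Beyond the acknowledged gap, two concrete steps in your plan are flawed. First, your claim that ``each $M[y]$ occupies a circular arc of minimal elements in the counter-clockwise order on the envelope $\cgE$'' is false: $M[y]$ spans an arc from $a(y)$ to $b(y)$ in $L$, but minimal elements strictly between them in $L$ need not lie below $y$ --- this is precisely what Type~$1B$ and Type~$2C$ pairs are (e.g., $(m_3,y_2)$ and $(m_6,y_1)$ in Figure~\ref{fig:envelope}), and handling them is the hardest part of the paper's Lemma~\ref{lem:7_enough}. So the ``circular-arc representation'' on which your canonical-position extraction rests does not exist. Second, invoking the paper's Reduction to restrict straddling pairs to $\Min(P)\times P$ goes the wrong way for your purposes: that reduction \emph{enlarges} $P$ to a poset $P'$ by adding new minimal elements, which is sound for covering $\Inc(P)$ by reversible sets but not for finding subposets, since a copy of $S_d$ in $P'$ that uses the new points need not yield a copy of $S_d$ in $P$. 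Any serious attempt along these lines must either redo the reduction so that it is subposet-preserving or work with non-minimal lower covers directly.
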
  

We believe, but cannot be certain, that the first reference to this
conjecture is on page~$119$ in~\cite{bib:Trot-Book}, as it
has become part of the folklore of the subject.

In fact, probably the following considerably stronger conjecture
is true.

\begin{conjecture}\label{con:genus-Sd}
For every pair $(n,d)$ of positive integers with $d\ge2$, there
is an integer $d_0$ so that if $P$ is a poset and $\dim(P)\ge d_0$, then
either $P$ contains the standard example $S_d$ or the cover graph of
$P$ contains a $K_n$ minor.
\end{conjecture}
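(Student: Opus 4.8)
The plan is to establish the contrapositive: fix the pair $(n,d)$, assume the cover graph of $P$ excludes $K_n$ as a minor and that $P$ contains no copy of $S_d$, and then bound $\dim(P)$ by a constant depending only on $(n,d)$. The essential new difficulty, relative to the excluded-minor theorem of Walczak~\cite{bib:Walc} (which bounds dimension for posets of \emph{bounded height} whose cover graphs exclude a fixed minor), is that here the height of $P$ is unbounded; forbidding $S_d$ is precisely what must compensate for this. This is the regime that motivated the present paper, so the first step is to run the unfolding $(A_0,B_0,A_1,B_1,\dots)$ described in Section~\ref{sec:close}. Since some layer pair realizes at least half the dimension, it suffices to bound $\dim(A_i\cup B_i)$ (or $\dim(B_i\cup A_{i+1})$) for a union of a down set and an up set. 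By Theorem~\ref{thm:main} and its dual, the incomparable pairs internal to the $\AFB$ up set $B_i$ and internal to the co-$\AFB$ down set $A_i$ can be reversed with a bounded number of linear extensions, so the problem reduces to reversing the ``cross'' pairs $(x,y)$ with $x$ in the down set and $y$ in the up set.

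The second step is to trade the excluded-minor hypothesis for a genus hypothesis. By the Robertson--Seymour structure theorem, a $K_n$-minor-free cover graph admits a tree decomposition of bounded adhesion into pieces that are almost embeddable in a surface of genus bounded by a function of $n$: bounded-genus graphs together with a bounded number of apex vertices and a bounded number of bounded-width vortices. I would argue that adding boundedly many apex elements and gluing along bounded clique-sums each changes the dimension by at most a bounded additive amount, the apices and adhesion sets contributing boundedly many ``special'' elements whose incident cross pairs are absorbed into $O_n(1)$ extra extensions. This reduces the problem to cover graphs embedding in a fixed surface, and cutting that surface along a bounded system of curves yields planar pieces, reducing the surface case to the planar Conjecture~\ref{con:planar-Sd} together with bounded interface corrections.

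The heart of the matter, and the step I expect to be the main obstacle, is the planar bipartite core: given a down set and an up set whose union has a plane diagram, bound the number of reversible sets needed for the cross pairs in terms of the largest standard example present. The natural strategy is a dichotomy: analyze a putative long strict alternating cycle of cross pairs using the left-to-right geometry, envelope, and witnessing-path machinery of Section~\ref{sec:proof}, and show that if no small family of linear extensions reverses the cross pairs, then either the pairs assemble into a standard example $S_d$, or one can route many pairwise vertex-disjoint connected subgraphs between minimal and maximal elements through the diagram to realize a $K_n$ minor. Making this ``either $S_d$ or $K_n$ minor'' alternative quantitative is the crux: one must control the simultaneous interaction of unboundedly many cross pairs in the plane and rule out that large cross-pair dimension arises from a sparse, minor-free, standard-example-free configuration. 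A Ramsey-type reduction to a highly regular family of cross pairs, combined with a flow or linkage argument converting that regularity into either disjoint minimal-to-maximal linkages (hence a clique minor) or the index structure of a standard example, is the plausible route; but establishing such a clean dichotomy in the unbounded-height planar setting is exactly what keeps Conjecture~\ref{con:planar-Sd}, and therefore Conjecture~\ref{con:genus-Sd}, open.
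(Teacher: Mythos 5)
The statement you are addressing is Conjecture~\ref{con:genus-Sd}, which the paper presents as an open problem; no proof of it appears anywhere in the paper (it is offered as a strengthening of the folklore Conjecture~\ref{con:planar-Sd}, which is also open). Your proposal is therefore not a proof to be compared against anything, and by your own admission it is not a proof at all: your final sentence concedes that the central dichotomy --- either the cross pairs assemble into a standard example $S_d$ or one can route a $K_n$ minor --- is precisely what keeps both conjectures open. An outline whose key step is explicitly left unproven is a research program, not a proof; the gap is the entire heart of the argument.

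Beyond that overarching point, two of your intermediate steps are also unjustified. First, the unfolding of Section~\ref{sec:close} yields $\AFB$ subposets $B_i$ (and dually, subposets $A_i$ whose duals are $\AFB$) only when $P$ is \emph{planar}; for a general $K_n$-minor-free cover graph you cannot invoke Theorem~\ref{thm:main} on the layers, so your first step already presupposes the planarity reduction you only attempt afterward via the structure theorem. Second, the claim that gluing along bounded clique-sums and absorbing vortices ``changes the dimension by at most a bounded additive amount'' is not a known fact for posets of unbounded height: controlling dimension across a tree decomposition of bounded adhesion is essentially the content of the tree-width and excluded-minor results cited in the introduction (\cite{bib:JMMTWW}, \cite{bib:Walc}), all of which require bounded height. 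Unbounded height is exactly the regime you are in, and forbidding $S_d$ is not currently known to substitute for a height bound in any of those arguments. Both of these steps would need genuinely new ideas even before one reaches the planar bipartite core that you correctly identify as the crux.
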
  

In just the last two years, there has been considerable interest
in two variants of the original Dushnik-Miller notion of dimension.
They are called \textit{Boolean dimension} and \textit{local dimension}. We refer readers to~\cite{bib:TroWal}, \cite{bib:FeMeMi},
\cite{bib:MeMiTr} and~\cite{bib:BoGrTr} for definitions and results.

Specific to our interests here is the proof by Bosek, Grytczuk and
Trotter~\cite{bib:BoGrTr} that local dimension is not bounded for
planar posets.  The following conjecture is due to Ne\v{s}et\v{r}il and
Pudlak and is given in question form in their 1989 paper~\cite{bib:NesPud}
in which the concept of Boolean dimension is first introduced.
 
\begin{conjecture}\label{con:planar-bd}
The Boolean dimension of planar posets is bounded, i.e., there
is a constant $d_0$ so that if $P$ is a planar poset,
then the Boolean dimension of $P$ is at most $d_0$.
\end{conjecture}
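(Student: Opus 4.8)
The plan is to exploit the unfolding decomposition described above, together with Theorem~\ref{thm:main}, in order to confine all of the genuine difficulty to interactions between adjacent layers, and then to encode those interactions uniformly with a bounded number of linear orders and a single Boolean formula. The starting observation is that Boolean dimension is always at most the Dushnik--Miller dimension, so within any single $\AFB$ or dual-$\AFB$ block the comparabilities can be decided by a Boolean formula reading at most six comparison bits; the obstruction to a direct argument is precisely that the number of blocks $A_0, B_0, A_1, B_1,\dots$ is unbounded, so one cannot simply concatenate block realizers.

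First I would fix an unfolding $P = A_0\cup B_0\cup A_1\cup B_1\cup\cdots$ and record its two key features: (i) every comparability $x<y$ in $P$ has $x$ and $y$ in the same block or in two consecutive blocks, so the relation is \emph{layered}; and (ii) each $B_i$ is $\AFB$ and each $A_i$ is dual-$\AFB$, hence each has dimension at most~$6$ by Theorem~\ref{thm:main}. The aim is to produce a constant $d_0$ and linear orders $L_1,\dots,L_{d_0}$ on the ground set of $P$, together with a Boolean formula $\tau$, so that for every ordered pair $(x,y)$ the truth value of $x<y$ is recovered from the comparison bits of $x$ and $y$ in the $L_j$.

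Next I would split the coordinates into two roles. A first group of orders would be designed so that, from the comparison bits alone, $\tau$ can determine whether $x$ and $y$ lie in the same block, in consecutive blocks, or in blocks that are far apart; in the last case the pair is automatically incomparable and $\tau$ returns false. A second group of orders would carry, within each block and across each single block boundary, the left/right and inside/outside data furnished by the $\AFB$ machinery (Proposition~\ref{pro:C2} and the classification into $\cgR,\cgL,\cgO,\cgI$), so that for near pairs $\tau$ applies the same fixed decision logic used to realize a single $\AFB$ block. Since comparability depends only on the relative position of two consecutive layers, I would try to make this second group act as a \emph{sliding window} that behaves identically at every boundary, using a two- or three-colouring of the layers by parity to keep the windows at different boundaries from interfering.

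The hard part, and the reason this remains a conjecture, is exactly the globalisation step: a fixed number of linear orders yields only boundedly many comparison bits per pair, yet the block index ranges over an unbounded set, so $\tau$ cannot literally know which boundary it is examining. One must therefore prove that the local $\AFB$ data, suitably interleaved, already determines comparability with no reference to the absolute layer index, and that the sliding-window encoding can be realised by genuine linear orders on the whole ground set rather than boundary by boundary. This is essentially the barrier separating ordinary dimension (which is unbounded here because faraway layers can jointly assemble a large standard example) from Boolean dimension, and it is tightly linked to Conjecture~\ref{con:planar-Sd}: a robust understanding of how large standard examples can be spread across the layers of a planar poset would, I expect, be the decisive ingredient in completing such an argument.
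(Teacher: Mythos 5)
You are attempting to prove a statement that the paper itself does not prove: Conjecture~\ref{con:planar-bd} is the Ne\v{s}et\v{r}il--Pudlak question from 1989, and it appears in the paper only as an open conjecture. The authors' sole claim about it is that Theorem~\ref{thm:main} together with unfolding ``will prove useful in attacking'' it. Your sketch reproduces exactly that intended attack---unfold $P$ into blocks $A_0,B_0,A_1,B_1,\dots$, use the fact that each $B_i$ is an $\AFB$-poset and each $A_i$ is dual to one, hence each has dimension (and so Boolean dimension) at most~$6$, and then try to encode the cross-boundary interactions uniformly---so there is no alternative route to compare; there is simply no proof on either side. You are candid about this, but candor does not close the gap: the ``globalisation step'' you defer is not a technical loose end, it is the entire content of the conjecture.

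To name the failure point concretely: the sliding-window encoding breaks because windows overlap. Every block $B_i$ participates in two boundaries, one with $A_i$ and one with $A_{i+1}$, so the restrictions of your global linear orders to $B_i$ must simultaneously serve as coordinates of two different local realizers; a parity (or any bounded) colouring of layers cannot disentangle this, since consecutive windows always share a block. Moreover, even the preliminary step---arranging that a fixed formula $\tau$, reading only boundedly many comparison bits, can tell whether two elements lie in the same block, adjacent blocks, or distant blocks---requires constructing genuine linear orders on all of $P$ with that property, which you assert rather than provide. What is known is only the easy direction you start from (Boolean dimension is at most Dushnik--Miller dimension, so each block individually is fine); what is unknown is precisely whether boundedly many \emph{global} orders can carry the unboundedly many local $\AFB$ realizers coherently. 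Absent a construction or a genuinely new idea (the authors speculate, as you do, that progress on Conjecture~\ref{con:planar-Sd} may be the missing ingredient), your text is a research plan consistent with the paper's closing remarks, not a proof of the conjecture.
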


We believe that the results presented here will prove useful in
attacking this conjecture with the assistance of the concept
of unfolding.

\end{document}